\theoremstyle{plain}
\newtheorem{theorem}{\bf Theorem}[section]
\newtheorem{lemma}{\bf Lemma}[section]
\theoremstyle{definition}
\theoremstyle{remark}
\numberwithin{equation}{section}
\title[Stabilization of parabolic semilinear equations]{ Stabilization of Parabolic Semilinear Equations}
\author{Ionu\c{t} Munteanu}
\address{\noindent Ionu\c{t} Munteanu\newline Alexandru Ioan Cuza University, Department of Mathematics, and Octav Mayer Institute of Mathematics (Romanian Academy) 
\newline  700506 Ia\c{s}i, Romania}
 \email{ionut.munteanu@uaic.ro}
\begin{document}

\maketitle
\begin{abstract}We design here a finite-dimensional feedback stabilizing Dirichlet boundary controller for the equilibrium solutions to parabolic equations. These results extend  that ones in \cite{barbu1}, which provide a feedback controller expressed in terms of the eigenfunctions $\phi_j$ corresponding to the unstable eigenvalues $\left\{\lambda_j\right\}_{j=1}^N$ of the operator corresponding to the linearized equation. In \cite{barbu1}, the stabilizability result is conditioned by the require of   linear independence of $\left\{\frac{\partial}{\partial\nu}\phi_j\right\}_{j=1}^N$, on the part of the boundary where control acts. In this work, we design a similar control as in \cite{barbu1}, and show that it assures the stability of the system. This time, we drop the  require of linear independence and any other additional hypothesis.  Some examples are provided in order to illustrate the acquired results. More exactly, boundary stabilization of the heat equation and  the Fitzhugh-Nagumo equation. 
\end{abstract}
\noindent\textbf{Keywords:} semilinear parabolic equations, feedback controller, eigenvalue.

\noindent \textbf{2000 MSC:} 35K91; 93D15; 80A20.
\section{introduction}

We consider the following parabolic boundary stabilization problem in a  bounded domain $\Omega\subset\mathbb{R}^d,$ with a smooth boundary $\partial\Omega$, split in two, i.e., $\partial\Omega=\Gamma_1\cup\Gamma_2$, such that the Lebesgue measure $\sigma(\Gamma_1)\neq0:$
\begin{equation}\label{e40}\left\{\begin{array}{l}p_t(t,x)-\Delta p(t,x)+f(x,p)=0,\ t>0,\ x\in\Omega,\\
p(t,x)=u(t,x),\ t>0,\ x\in\Gamma_1,\\
\frac{\partial}{\partial\nu}p(t,x)=0,\ t>0,\ x\in \Gamma_2,\\
p(0,x)=p_o(x),\ x\in\Omega.
 \end{array}\right.\ \end{equation} Here,  $\nu$ stands for the unit outward   normal on the boundary $\partial\Omega$; $p_o\in L^2(\Omega)$ is  given (the initial data); $f$ is a nonlinear function for which, depending on the context, we choose from the  following two  hypotheses
\begin{itemize}
\item[$(f_1)$] $f,f_p\in C(\overline{\Omega}\times\mathbb{R})$;
\item[$(f_2)$] $f,f_p,f_{pp}\in C(\overline{\Omega}\times\mathbb{R})$, and there exist $C_1>0,\ q\in\mathbb{N},\  \alpha_i>0,\ i=1,...,q,$ when $d=1,2$, and $0< \alpha_i\leq 1,\ i=1,...,q, $ when $d=3$, such that $$|f_{pp}(x,p)|\leq C_1\left(\sum_{i=1}^q|p|^{\alpha_i}+1\right),\ \forall p\in \mathbb{R}.$$
\end{itemize} Here, $f_p,f_{pp}$ denote the partial derivatives of $f$ with respect to its second argument. On the part of the boundary $\Gamma_1$ it is applied the control $u$, while $\Gamma_2$ is insulated. 
 
 Let $p_e\in C^1(\overline{\Omega})$ be an equilibrium solution to (\ref{e40}), that is, $p_e$ satisfies
 $$-\Delta p_e+f(x,p_e)=0 \text{ in }\Omega;\ p_e=0 \text{ on }\Gamma_1,\ \frac{\partial}{\partial \nu}p_e=0 \text{ on }\Gamma_2.$$ Then, defining the fluctuation variable $w:=p-p_e$, equation (\ref{e40}) can be rewritten as
 \begin{equation}\label{e50}\left\{\begin{array}{l}w_t(t,x)-\Delta w(t,x)+f(x,w+p_e)-f(x,p_e)=0,\ t>0,\ x\in\Omega,\\
w(t,x)=\textbf{v}:=u(t,x)-p_e(x),\ t>0,\ x\in\Gamma_1,\\
\frac{\partial}{\partial\nu}w(t,x)=0,\ t>0,\ x\in \Gamma_2,\\
w(0,x)=w_o(x):=p_o(x)-p_e(x),\ x\in\Omega.
 \end{array}\right.\ \end{equation} 
 
 Our main concern here is the design of a boundary feedback controller $u$ that stabilizes exponentially the equilibrium state $p_e$ in (\ref{e40}), or, equivalently, the zero solution in (\ref{e50}).

 The problem of boundary feedback stabilization of this kind of parabolic-like equations was first solved in the pioneering work \cite{trig}. Then,  several others methods were proposed for deriving new types of controls, like backstepping approach, see \cite{bal,kk1,k1,k2}, or the proportional type controllers designed in \cite{barbu1,barbu2, ionut}. The last ones  provide a simple form feedback stabilizer expressed in terms of the unstable 
eigenfunctions of the operator given by the linearized equation around the equilibrium $p_e$  (that is the  operator $\mathcal{A}$ below), which is easily implementable in practice. However, the stabilizing procedure is conditioned  by the require that the  system $\left\{\frac{\partial}{\partial\nu}\phi_i\right\}_{i=1}^N$ is linearly independent in $L^2(\Gamma_1)$, where, $\phi_i,\ i=1,..., N$, are the corresponding first $N$ eigenfunctions.  In the present work, we  extend these results in the sense that we  drop the hypothesis of linear independence (which, in fact, seems to be quite restrictive, as shown in \cite{barbu1}) . More precisely, we design a similar type of control  as that one in \cite{barbu1}, and show that it assures the stability of the steady-state $p_e$, without needing the linear independence assumption.  Roughly saying,  the idea is as follows: in the case when the system $\left\{\frac{\partial}{\partial\nu}\phi_i\right\}_{i=1}^N$  is not linearly independent, the corresponding Gram matrix is singular, and, consequently, the approach in \cite{barbu1} fails because the controller  is not well-defined anymore. However, we overcome this  in the next manner: for the beginning, in order to clarify the procedure, we  work under the assumption that the first $N\in\mathbb{N}$ unstable eigenvalues are simple. We involve $N$ matrices obtained from the Gramian, multiplied by some  diagonal matrices, and show that  their sum is invertible, even if each one  is not. Therefore, we may well-define $u$ as the sum of  $N$ different controllers obtained from those matrices, and arrive to similar stabilization results as in \cite{barbu1}. The case of general eigenvalues involves a similar boundary feedback, as the one described above. The idea is to artificially make the eigenvalues simple again by slightly changing the linear operator.
 Finally, via a fixed point argument, locally stabilization results may be deduced for the fully nonlinear system (\ref{e40}).

 On this subject, it should be mentioned also the work \cite{badra}, where controllers, of the above type, are designed. Instead of the eigenfunctions system, they show that there exists a (possible different) family of functions for which, the corresponding proportional controller assures the stability. That result is based on some unique continuation property for the involved operators. Also, the work \cite{lasi}, on the Navier-Stokes equations, provide similar proportional feedbacks, but with the coefficients given by Riccati algebraic equations. However, in comparison to those results, here, the form of the feedback is given explicitly, and no additional assumption to the classical context is needed.

 \section{The design of the feedback law and the main results}
In this section, we detail the design of the new feedback, show how it differs from that one in \cite{barbu1}, and state the main stabilization results. To this end, let us define the operator $\mathcal{A}:\mathcal{D}(\mathcal{A})\rightarrow L^2(\Omega)$ as
 $$\mathcal{A}y:=-\Delta y+a(x)y,\ \forall y\in \mathcal{D}(\mathcal{A})=\left\{y\in H^2(\Omega): y|_{\Gamma_1}=0,\ \frac{\partial}{\partial\nu}y|_{\Gamma_2}=0\right\},$$ where $a(x):=f_y(x,y_e(x)),\ x\in \Omega$.  Notice that $\mathcal{A}$ is self-adjoint, besides this, it can be shown that  $\mathcal{A}$ has compact resolvent,  therefore, it has a countable set of eigenvalues, denoted by $\left\{\lambda_j\right\}_{j=1}^\infty $(repeated accordingly to their multiplicity) such that, given $\rho>0$,  there exists only a finite number  of eigenvalues $\left\{\lambda_i\right\}_{i=1}^N$ with $\lambda_i<\rho,\ i=1,...,N$ (we call them the unstable eigenvalues). So,  $\lambda_i\geq\rho,$ for all $i\geq N+1$ (for more details, see \cite{barbu1}). As we  see below, the larger $\rho$ is, the faster the exponential decay of the solution is. (But, on the other hand, the larger  $\rho$ is, the larger dimension, $N$, of the feedback is.)  Let us denote by $\left\{\phi_i\right\}_{i=1}^\infty$ the corresponding eigenfunctions, that can be chosen in such a way to form an orthonormal basis of the space $\mathcal{D}(\mathcal{A})$. 
 \subsection{The case of simple eigenvalues}
   For the beginning,  in order to make clearer our approach, we add to the above context the following hypothesis:
  $$(H)\ \ \ \text{ The eigenvalues $\lambda_i,\ i=1,...,N$, are mutually distinct}.$$ In other words, we assume that the first $N$ eigenvalues are simple. So, we can (eventually) rearrange them such that
  $$\lambda_1<\lambda_2<...<\lambda_N.$$ Then, in the subsection below, we show how we may drop this assumption, in order to obtain stabilization results in the framework of general eigenvalues.  
  
One of the tricks used here is to "lift" the boundary conditions into the equations. For this, we introduce the so-called Dirichlet operator as: given  $\alpha\in L^2(\Gamma_1)$ and $\gamma>0$, we denote by $D_\gamma \alpha:=y$, the solution to the equation
\begin{equation}\label{e4}\left\{\begin{array}{l}\displaystyle-\Delta y(x)+a(x)y(x)-2\sum_{k=1}^N\lambda_k\left<y,\phi_k\right>\phi_k(x)+\gamma y(x)=0,\text{ for } x\in\Omega,\\
y=\alpha, \text{ on } \Gamma_1,\\
\frac{\partial}{\partial\nu}y=0,\text{ on }\Gamma_2.\end{array}\right.\
\end{equation}
For $\gamma>0$ large enough equation (\ref{e4}) has a  solution, defining so the map $D_\gamma\in L(L^2(\Gamma_1),H^\frac{1}{2}(\Omega))$. Besides this, we have 
$$ \|D_\gamma\alpha\|_{H^\frac{1}{2}}\leq \frac{C}{\gamma}\|\alpha\|_{L^2(\Gamma_1)},\ \forall \alpha\in L^2(\Gamma_1).$$(see, e.g., \cite[p.6, line 16]{10}) We choose $\rho<\gamma_1<\gamma_2<...<\gamma_N$, $N$ constants such that equation (\ref{e4}) is well-posed for each of them, and denote by $D_{\gamma_i},\ i=1,...,N,$ the corresponding Dirichlet operator. 
  
  Next, let us denote by $\textbf{B}$ the Gram matrix of the system $\left\{\frac{\partial}{\partial\nu}\phi_i\right\}_{i=1}^N$ in the Hilbert space $L^2(\Gamma_1)$, with the standard scalar product  $\left<g,h\right>_0:=\int_{\Gamma_1}f(x)g(x)d\sigma$ ($\sigma$ being  the corresponding Lebesgue measure on the boundary $\Gamma_1$). That is
  \begin{equation}\label{e9}\textbf{B}:=\left(\begin{array}{cccc}\left<\frac{\partial}{\partial \nu}\phi_1,\frac{\partial}{\partial\nu}\phi_1\right>_0& \left<\frac{\partial}{\partial \nu}\phi_1,\frac{\partial}{\partial\nu}\phi_2\right>_0&...&\left<\frac{\partial}{\partial \nu}\phi_1,\frac{\partial}{\partial\nu}\phi_N\right>_0\\
\left<\frac{\partial}{\partial \nu}\phi_2,\frac{\partial}{\partial\nu}\phi_1\right>_0& \left<\frac{\partial}{\partial \nu}\phi_2,\frac{\partial}{\partial\nu}\phi_2\right>_0&...&\left<\frac{\partial}{\partial \nu}\phi_2,\frac{\partial}{\partial\nu}\phi_N\right>_0\\
...&...&...&...\\
\left<\frac{\partial}{\partial \nu}\phi_N,\frac{\partial}{\partial\nu}\phi_1\right>_0&\left<\frac{\partial}{\partial \nu}\phi_N,\frac{\partial}{\partial\nu}\phi_2\right>_0&...&\left<\frac{\partial}{\partial \nu}\phi_N,\frac{\partial}{\partial\nu}\phi_N\right>_0\end{array}\right).\end{equation} Further, we introduce the matrices
\begin{equation}\Lambda_{\gamma_k}:=\left(\begin{array}{cccc}\frac{1}{\gamma_k-\lambda_1}&0&...&0\\
0&\frac{1}{\gamma_k-\lambda_2}&...&0
\\...&...&...&...\\
0&0&...&\frac{1}{\gamma_k-\lambda_N} \end{array}\right),\ k=1,...,N,
\end{equation} 
 \begin{equation}\label{T}T:=\left(\begin{array}{cccc}\frac{1}{\gamma_1-\lambda_1}\frac{\partial}{\partial\nu}\phi_1(x) & \frac{1}{\gamma_1-\lambda_2}\frac{\partial}{\partial\nu}\phi_2(x) &...& \frac{1}{\gamma_1-\lambda_N}\frac{\partial}{\partial\nu}\phi_N(x)\\
\frac{1}{\gamma_2-\lambda_1}\frac{\partial}{\partial\nu}\phi_1(x) & \frac{1}{\gamma_2-\lambda_2}\frac{\partial}{\partial\nu}\phi_2(x) &...& \frac{1}{\gamma_2-\lambda_N}\frac{\partial}{\partial\nu}\phi_N(x)\\
...&...&...&...\\
\frac{1}{\gamma_N-\lambda_1}\frac{\partial}{\partial\nu}\phi_1(x) & \frac{1}{\gamma_N-\lambda_2}\frac{\partial}{\partial\nu}\phi_2(x) &...& \frac{1}{\gamma_N-\lambda_N}\frac{\partial}{\partial\nu}\phi_N(x)\end{array}\right),\end{equation}and \begin{equation}\label{AA}A=(B_1+B_2+...+B_N)^{-1},\end{equation}where  
\begin{equation}\label{bo}B_k:=\Lambda_{\gamma_k}\textbf{B}\Lambda_{\gamma_k},\ k=1,...,N.\end{equation}

At this stage, we are able to give the form of the stabilizing feedback, but, before this let us take a pause and make some comments on the invertibility of the sum  $B_1+...+B_N$, which is, in fact, the key result of this paper. More exactly, since the system $\left\{\frac{\partial}{\partial\nu}\phi_i\right\}_{i=1}^N$  is not necessarily  linearly independent in $L^2(\Gamma_1)$, the gramian $\textbf{B}$ may be singular, and, therefore,  each $B_k, k=1,...,N,$ may be  singular. However, by Lemma \ref{l1} in the Appendix,  the sum $B_1+...+B_N$ is an invertible matrix, provided that hypothesis $(H)$ holds true. Therefore, the matrix $A$ is well-defined.

Now, let us introduce the feedbacks

\begin{equation}\label{e1}u_k(t,x)=\left<A\left(\begin{array}{c}\left<p(t)-p_e,\phi_1\right>\\
\left<p(t)-p_e,\phi_2\right>\\...\\\left<p(t)-p_e,\phi_N\right>\end{array}\right),\left(\begin{array}{c}\frac{1}{\gamma_k-\lambda_1}\frac{\partial}{\partial \nu}\phi_1(x)\\ \frac{1}{\gamma_k-\lambda_2}\frac{\partial}{\partial \nu}\phi_2(x)\\ ... \\\frac{1}{\gamma_k-\lambda_N}\frac{\partial}{\partial \nu}\phi_N(x)\end{array}\right)\right>_N+p_e,\ t\geq0,\ x\in \Gamma_1,
\end{equation} for $k=1,2,...,N$. Here $\left<\cdot,\cdot\right>$ denotes the standard scalar product in $L^2(\Omega)$; while $\left<\cdot,\cdot\right>_N$ denotes the standard scalar product  in $\mathbb{R}^N$. 

In the case where the Gram matrix $\textbf{B}$ is nonsingular, one may consider only one feedback from the above list, for example $u_1$ (since, in this case,  $B_1^{-1}$ exists), take $u=u_1$ and show that it assures the  stability  stated in the main results of the paper, Theorems \ref{T1}, \ref{T2} below. In this case, in fact, we stumble on  the feedback designed in \cite{barbu1}. 

When $\textbf{B}$ is not invertible anymore, we take $u$ to be the sum
$$u=u_1+u_2+...+u_N,$$ which, in a condensed form, can be written as 
 \begin{equation}\label{u}u= \left<T\ A\left(\begin{array}{c}\left<p(t)-p_e,\phi_1\right>\\
\left<p(t)-p_e,\phi_2\right>\\...\\\left<p(t)-p_e,\phi_N\right>\end{array}\right),\left(\begin{array}{c}1\\ 1\\ ... \\1\end{array}\right)\right>_N+p_e.\end{equation}

We claim that the above feedback assures the stability of the steady-state $p_e$ in (\ref{e40}). In what follows, we focus to prove this is indeed so. The main ingredient toward the proposed goal is the stabilization of the linearized equation  corresponding to (\ref{e50}), given by,
 \begin{equation}\label{e51}\left\{\begin{array}{l}y_t(t,x)-\Delta y(t,x)+a(x)y(t,x)=0,\ t>0,\ x\in\Omega,\\
y(t,x)=\textbf{v}(t,x),\ t>0,\ x\in\Gamma_1,\\
\frac{\partial}{\partial\nu}y(t,x)=0,\ t>0,\ x\in \Gamma_2,\\
y(0,x)=y_o,\ x\in\Omega.
 \end{array}\right.\ \end{equation}
 The following results amounts to saying that the feedback $u$ achieves global exponential stability in the linear system (\ref{e51}), and local exponential stability in (\ref{e40}). More precisely
 \begin{theorem}\label{T1}Assume that hypothesis $(H)$ holds and $f$ obeys $(f_1)$. The feedback $u$, given by (\ref{u}), exponentially stabilizes the linearized equation (\ref{e51}). More exactly,  the solution $y$ to the equation
 \begin{equation}\label{e501}\left\{\begin{array}{l}y_t(t,x)-\Delta y(t,x)+f_y(x,y_e(x))y(t,x)=0,\ t>0,\ x\in\Omega,\\
y(t,x)=\left<T\ A\left(\begin{array}{c}\left<y(t),\phi_1\right>\\
\left<y(t),\phi_2\right>\\...\\\left<y(t),\phi_N\right>\end{array}\right),\left(\begin{array}{c}1\\ 1\\ ... \\1\end{array}\right)\right>_N,\ t>0,\ x\in\Gamma_1,\\
\frac{\partial}{\partial\nu}y(t,x)=0,\ t>0,\ x\in \Gamma_2,\\
y(0,x)=y_o,\ x\in\Omega,
 \end{array}\right.\ \end{equation}
 satisfies the exponential decay \begin{equation}\label{mu}\|y(t)\|^2\leq Ce^{-\mu t}\|y_o\|^2,\ t\geq0,\end{equation}for a prescribed $\mu>0$, and a constant $C>0$. Here $T$ is introduced in relation (\ref{T}), while $A$ is introduced in relation (\ref{AA}). $\|\cdot\|$ denotes the norm in $L^2(\Omega)$.
\end{theorem}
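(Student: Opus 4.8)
\emph{Plan of proof.} The plan is to lift the boundary condition into the interior by means of the Dirichlet operators $D_{\gamma_k}$ from (\ref{e4}). Write the boundary datum of (\ref{e501}) as $\textbf{v}=\sum_{k=1}^N v_k$, where $v_k:=\sum_{m=1}^N\frac{1}{\gamma_k-\lambda_m}(AY(t))_m\,\frac{\partial}{\partial\nu}\phi_m$ and $Y(t):=\left(\langle y(t),\phi_m\rangle\right)_{m=1}^N\in\mathbb{R}^N$, and introduce the new unknown $z:=y-\sum_{k=1}^N D_{\gamma_k}v_k$, which lies in $\mathcal{D}(\mathcal{A})$ since its trace on $\Gamma_1$ vanishes. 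First I would record the algebraic identity that drives the argument: testing the defining equation (\ref{e4}) for $D_{\gamma_k}v_k$ against $\phi_j$ and using Green's formula together with $\phi_j|_{\Gamma_1}=0$, $\frac{\partial}{\partial\nu}\phi_j|_{\Gamma_2}=0$ yields, for $j\le N$, the relation $\langle D_{\gamma_k}v_k,\phi_j\rangle=-(B_kAY)_j$ with $B_k$ as in (\ref{bo}). Summing over $k$ and invoking $A=(B_1+\cdots+B_N)^{-1}$ from (\ref{AA}) — which is legitimate exactly because Lemma \ref{l1} makes $B_1+\cdots+B_N$ invertible under $(H)$ — collapses the sum to $\sum_k\langle D_{\gamma_k}v_k,\phi_j\rangle=-Y_j$, whence the crucial identity $\langle z,\phi_j\rangle=2\langle y,\phi_j\rangle$ for $j=1,\dots,N$. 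This is where the factor $2$ and the correction $-2\sum_k\lambda_k\langle\cdot,\phi_k\rangle\phi_k$ in (\ref{e4}) pay off; it also shows that $y$ and $z$ differ by a bounded invertible finite-rank map, so it suffices to prove that $z(t)\to0$ exponentially.

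Next I would extract the closed dynamics of the unstable coordinates $Y$. Projecting the first equation of (\ref{e501}) onto $\phi_j$ and integrating by parts gives $\dot Y_j+\lambda_j Y_j+\int_{\Gamma_1}\frac{\partial}{\partial\nu}\phi_j\,\textbf{v}\,d\sigma=0$; since $\textbf{v}$ depends on the state only through $Y$, this is an autonomous linear system, namely $\dot Y=-\bigl[\Lambda+\textbf{B}\bigl(\sum_k\Lambda_{\gamma_k}\bigr)A\bigr]Y$ with $\Lambda:=\mathrm{diag}(\lambda_1,\dots,\lambda_N)$, which, using the diagonal identity $\gamma_k\Lambda_{\gamma_k}-I=\Lambda\Lambda_{\gamma_k}$, simplifies to $\dot Y=-\bigl(\sum_k\gamma_k B_k\bigr)A\,Y$. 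Setting $P:=\sum_k B_k=A^{-1}$ and $Q:=\sum_k\gamma_k B_k$, both symmetric with $P\succ0$, the ordering $\rho<\gamma_1<\cdots<\gamma_N$ together with $B_k\succeq0$ gives $Q-\rho P=\sum_k(\gamma_k-\rho)B_k\succeq0$. Hence $P^{-1/2}QP^{-1/2}\succeq\rho I$, and the substitution $Y=P^{1/2}W$ turns the system into $\dot W=-P^{-1/2}QP^{-1/2}W$, so that $|Y(t)|\le Ce^{-\rho t}|Y(0)|$.

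For the infinite-dimensional remainder I would derive the $L^2(\Omega)$-evolution of $z$. Denoting by $\mathcal{A}_0:=-\Delta+a(x)$ the differential expression (which agrees with $\mathcal{A}$ on $\mathcal{D}(\mathcal{A})$) and using (\ref{e4}) to rewrite $\mathcal{A}_0 D_{\gamma_k}v_k=2\sum_m\lambda_m\langle D_{\gamma_k}v_k,\phi_m\rangle\phi_m-\gamma_k D_{\gamma_k}v_k$ — the key being that no unbounded operator survives this substitution — one gets $z_t+\mathcal{A}z=F$, where $F$ is a finite-rank forcing assembled from $\phi_m$, $D_{\gamma_k}v_k$ and $D_{\gamma_k}\dot v_k$, all of which belong to $L^2(\Omega)$ and satisfy $\|F(t)\|\le C(|Y(t)|+|\dot Y(t)|)\le Ce^{-\rho t}$. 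Let $P_N$ be the orthogonal projection onto $\mathrm{span}\{\phi_1,\dots,\phi_N\}$. The component $P_N z=2\sum_{j\le N}Y_j\phi_j$ already decays; applying $I-P_N$ and using that $\mathcal{A}\ge\rho$ on its range gives $\frac12\frac{d}{dt}\|(I-P_N)z\|^2\le-\rho\|(I-P_N)z\|^2+\|F\|\,\|(I-P_N)z\|$, so Gronwall yields $\|(I-P_N)z(t)\|\le C(1+t)e^{-\rho t}$. Collecting the two parts, $\|z(t)\|^2\le C(1+t)^2e^{-2\rho t}$; transporting back through $y=z+\sum_k D_{\gamma_k}v_k$, together with the bound $\|D_{\gamma_k}v_k\|\le\frac{C}{\gamma_k}\|v_k\|_{L^2(\Gamma_1)}\le C|Y|$ from (\ref{e4}), produces (\ref{mu}) for any $\mu<2\rho$; since $\rho$ (hence $N$) is at our disposal, a prescribed $\mu$ is reached by choosing $\rho>\mu/2$.

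The step I expect to be the main obstacle is the rigorous passage to this abstract formulation and the attendant well-posedness: one must justify that $z$ is a genuine solution at the regularity where the trace and Green's formula manipulations are licit, and, crucially, that rewriting $\mathcal{A}_0 D_{\gamma_k}v_k$ via (\ref{e4}) really leaves only an $L^2$-forcing $F$, so that no derivative is lost at the boundary. A secondary technical nuisance is the polynomial factor $(1+t)$ caused by the resonance between the decay of $F$ and the spectral gap, both of size $\rho$; it is harmless, being absorbed by taking $\rho$ strictly larger than $\mu/2$, but it must be tracked. By contrast, the identity $\langle z,\phi_j\rangle=2\langle y,\phi_j\rangle$ and the sign condition $Q\succeq\rho P$ are routine once Lemma \ref{l1} is available.
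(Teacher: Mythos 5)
Your proposal is correct and follows essentially the same route as the paper: the same decomposition $\textbf{v}=v_1+\dots+v_N$, the same lifting $z=y-\sum_{k}D_{\gamma_k}v_k$, the same key identity $\left<D_{\gamma_k}v_k,\phi_j\right>=-(B_kAY)_j$ obtained by testing (\ref{e4}) against $\phi_j$, the same reliance on Lemma \ref{l1} for the invertibility of $B_1+\dots+B_N$, and the same Lyapunov quadratic form (your substitution $Y=P^{1/2}W$ with $P=A^{-1}$ is exactly the paper's multiplication by $A\mathcal{Z}$, and your ODE $\dot Y=-QAY$ with $Q=\sum_k\gamma_kB_k$ coincides with (\ref{e44ee}) since $\mathcal{Z}=2Y$ and $-\gamma_1 I+\sum_{k\geq2}(\gamma_1-\gamma_k)B_kA=-QA$). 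The only departures are organizational rather than substantive: you derive the unstable-mode ODE by projecting the $y$-equation directly through Green's formula instead of going through the $z$-equation and the residual $R$ of (\ref{r}), and you carry out explicitly the tail estimate for the stable modes (including the $(1+t)$ resonance factor, absorbed by taking $\rho>\mu/2$) which the paper simply imports from \cite{barbu1}.
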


\begin{theorem}\label{T2}Assume that hypotheses $(H),(f_2)$ hold true. Then, the solution to the closed-loop nonlinear equation
 \begin{equation}\label{e502}\left\{\begin{array}{l}p_t(t,x)-\Delta p(t,x)+f(x,p)=0,\ t>0,\ x\in\Omega,\\
p(t,x)=\left<T\ A\left(\begin{array}{c}\left<p(t)-p_e,\phi_1\right>\\
\left<p(t)-p_e,\phi_2\right>\\...\\\left<p(t)-p_e,\phi_N\right>\end{array}\right),\left(\begin{array}{c}1\\ 1\\ ... \\1\end{array}\right)\right>_N+p_e(x),\ t>0,\ x\in\Gamma_1,\\
\frac{\partial}{\partial\nu}p(t,x)=0,\ t>0,\ x\in \Gamma_2,\\
p(0,x)=p_o,\ x\in\Omega,
 \end{array}\right.\ \end{equation}
 satisfies the exponential decay $$\|p(t)-p_e\|^2\leq Ce^{-\mu t}\|p_o-p_e\|^2,\ t\geq0,$$ for a prescribed $\mu>0$, and a constant $C>0$, provided that $\|p_o-p_e\|$ is small enough. Here $T$ is introduced in relation (\ref{T}), while $A$ is introduced in relation (\ref{AA}).

\end{theorem}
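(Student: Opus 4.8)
The plan is to treat \eqref{e502} as a perturbation of the linearized closed-loop system \eqref{e501}, whose exponential decay is already furnished by Theorem \ref{T1}, and to absorb the genuinely nonlinear part by a fixed-point argument in a space of exponentially decaying trajectories. First I would pass to the fluctuation variable $w:=p-p_e$. Since the scalar products entering the feedback are $\langle p(t)-p_e,\phi_j\rangle=\langle w(t),\phi_j\rangle$, the boundary datum in (\ref{e502}) coincides, as a function of $w$, with the feedback in (\ref{e501}). Writing $f(x,w+p_e)-f(x,p_e)=a(x)w+N(w)$, where $a(x)=f_p(x,p_e)$ is the coefficient defining $\mathcal{A}$ and $N(w):=f(x,w+p_e)-f(x,p_e)-a(x)w$ is the Taylor remainder, the closed-loop problem for $w$ becomes
\begin{equation}\label{wnl}
\left\{\begin{array}{l}
w_t-\Delta w+a(x)w=-N(w),\ t>0,\ x\in\Omega,\\
w=\left<T\,A\,(\langle w(t),\phi_1\rangle,\dots,\langle w(t),\phi_N\rangle)^{\top},\mathbf{1}\right>_N,\ x\in\Gamma_1,\\
\frac{\partial}{\partial\nu}w=0,\ x\in\Gamma_2,\\
w(0)=w_o:=p_o-p_e.
\end{array}\right.
\end{equation}
Hypothesis $(f_2)$ together with Taylor's formula yields the pointwise bound $|N(w)|\le C\big(1+\sum_{i=1}^q|w|^{\alpha_i}\big)|w|^2$, so that $N$ is at least quadratic near $w=0$; the dimensional restrictions on the $\alpha_i$ are precisely what makes $N(w)$ controllable in $L^2(\Omega)$ through the Sobolev embeddings available for $d\le3$.

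Next I would recast the linear part of (\ref{wnl}) as an abstract Cauchy problem. Lifting the boundary feedback by means of the Dirichlet operators $D_{\gamma_k}$ from (\ref{e4}), the homogeneous version of (\ref{wnl}) is governed by a closed-loop generator, call it $\mathcal{A}_{cl}$, whose solution semigroup $e^{-t\mathcal{A}_{cl}}$ is exactly the operator analysed in Theorem \ref{T1}. Consequently $e^{-t\mathcal{A}_{cl}}$ is analytic and satisfies $\|e^{-t\mathcal{A}_{cl}}\|_{L(L^2(\Omega))}\le Ce^{-\mu t/2}$, together with the smoothing estimates $\|\mathcal{A}_{cl}^{\,\theta}e^{-t\mathcal{A}_{cl}}\|\le Ct^{-\theta}e^{-\mu t/2}$ (with the fractional domains $\mathcal{D}(\mathcal{A}_{cl}^{\,\theta})$ coinciding with the usual interpolation spaces, since $\mathcal{A}_{cl}$ differs from $\mathcal{A}$ only by the boundary feedback). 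Duhamel's formula then rewrites (\ref{wnl}) as the integral equation
\begin{equation}\label{duh}
w(t)=e^{-t\mathcal{A}_{cl}}w_o-\int_0^t e^{-(t-s)\mathcal{A}_{cl}}N(w(s))\,ds.
\end{equation}

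Finally I would run a contraction-mapping argument on (\ref{duh}). Fixing a margin and running the linear estimates at a rate slightly larger than the prescribed $\mu$, introduce the Banach space
$$\mathcal{X}:=\Big\{w\in C\big([0,\infty);L^2(\Omega)\big):\ \|w\|_{\mathcal{X}}:=\sup_{t\ge0}e^{\mu t/2}\big(\|w(t)\|+t^{1/2}\|\mathcal{A}_{cl}^{1/2}w(t)\|\big)<\infty\Big\},$$
and let $\Phi(w)$ denote the right-hand side of (\ref{duh}). The semigroup bounds give $\|e^{-\cdot\,\mathcal{A}_{cl}}w_o\|_{\mathcal{X}}\le C\|w_o\|$, while the quadratic estimate on $N$, combined with the smoothing factors $(t-s)^{-\theta}$ and the embeddings $\mathcal{D}(\mathcal{A}_{cl}^{1/2})\hookrightarrow L^{r}(\Omega)$, yields $\|\Phi(w)-\Phi(\bar w)\|_{\mathcal{X}}\le C\big(\|w\|_{\mathcal{X}}+\|\bar w\|_{\mathcal{X}}\big)\|w-\bar w\|_{\mathcal{X}}$. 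Hence, on a small ball $\{\|w\|_{\mathcal{X}}\le r\}$, the map $\Phi$ is a contraction provided $\|w_o\|=\|p_o-p_e\|$ is small enough; its unique fixed point is the sought solution, and membership in $\mathcal{X}$ gives exactly $\|w(t)\|^2\le Ce^{-\mu t}\|w_o\|^2$, which is the claimed decay.

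The main obstacle I anticipate is the rigorous treatment of the unbounded, nonlocal Dirichlet boundary feedback inside the semigroup framework: one must verify that this feedback genuinely produces the generator $\mathcal{A}_{cl}$ certified by Theorem \ref{T1}, and that the analytic smoothing estimates survive this boundary-supported perturbation so that (\ref{duh}) and the fractional-power bounds are legitimate. The second delicate point is calibrating the fractional exponent $\theta$ and the Sobolev exponent $r$ so that the time singularity $(t-s)^{-\theta}$ stays integrable while still dominating the highest power $|w|^{1+\alpha_i}$ appearing in $N(w)$; this is exactly where the restriction $\alpha_i\le1$ for $d=3$ in $(f_2)$ is consumed.
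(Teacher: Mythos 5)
Your proposal is correct and follows essentially the route the paper itself indicates: the paper does not spell out a proof of Theorem \ref{T2} but states that one proceeds ``as in [Barbu, Theorem 4.2] or [Lasiecka--Triggiani, Theorem 1.1]'', i.e.\ exactly the perturbation-plus-fixed-point scheme you describe, in which the fluctuation $w=p-p_e$ solves the linearized closed loop of Theorem \ref{T1} forced by the Taylor remainder $N(w)$, the boundary feedback is lifted via the Dirichlet maps $D_{\gamma_k}$, and a contraction argument in an exponentially weighted space absorbs the quadratic nonlinearity (with $(f_2)$ and the Sobolev embeddings for $d\le 3$ consumed precisely where you say they are). The two technical caveats you flag --- legitimacy of the analytic closed-loop semigroup under the boundary-supported feedback, and the calibration of fractional powers against the growth exponents $\alpha_i$ --- are exactly the points handled in those cited references, so your outline matches the intended proof.
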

The proofs of these two theorems will be given latter, in Section \ref{S}.
\subsection{The case of general eigenvalues}
As announced above, we  see that, slightly perturbing the linear operator $\mathcal{A}$, we  are able to show that the present approach works  for the case of general unstable eigenvalues (i.e., not necessarily simple).  Indeed, let us assume, for example, that the first unstable eigenvalue has its multiplicity equal to 2, i.e., $\lambda_1=\lambda_2$, and the others unstable eigenvalues are simple (other cases can be treated in a similar manner as below). 

This time, the Dirichlet operator is introduced as follows: for any $\alpha\in L^2(\Gamma_1)$, we define  $D_\gamma\alpha:=y$, where $y$ is solution to
\begin{equation}\label{e4!}\left\{\begin{array}{l}\displaystyle -\Delta y(x)+a(x)y(x)-2\sum_{k=1}^N\lambda_k\left<y,\phi_k\right>\phi_k(x)-\delta\left<y,\phi_1\right>\phi_1+\gamma y(x)=0,\ x\in\Omega,\\
y=\alpha \text{ on } \Gamma_1,\\
\frac{\partial y}{\partial\nu}=0 \text{ on } \Gamma_2,\end{array}\right.\ \end{equation} for some $\delta>0$. We choose $\rho<\gamma_1<\gamma_2:=\gamma_1+\frac{1}{N-1}<\gamma_3:=\gamma_1+\frac{1}{N-2}...<\gamma_N:=\gamma_1+1,$ with $\gamma_1$   large enough, such that, for  $\delta=\frac{1}{\gamma_1^{4}}$ equation (\ref{e4!}) is well-posed for each $\gamma_i,\ i=1,...,N,$ and such that $\lambda_1+\delta,\lambda_2,...,\lambda_N$ are mutually distinct.

Now, our feedback has the form  
\begin{equation}\label{u!}u= \left<T\ A\left(\begin{array}{c}\left<p(t)-p_e,\phi_1\right>\\
\left<p(t)-p_e,\phi_2\right>\\...\\\left<p(t)-p_e,\phi_N\right>\end{array}\right),\left(\begin{array}{c}1\\ 1\\ ... \\1\end{array}\right)\right>_N+p_e.\end{equation}   $T$ is defined as
\begin{equation}\label{T!}T:=\left(\begin{array}{cccc}\frac{1}{\gamma_1-\delta-\lambda_1}\frac{\partial}{\partial\nu}\phi_1(x) & \frac{1}{\gamma_1-\lambda_2}\frac{\partial}{\partial\nu}\phi_2(x) &...& \frac{1}{\gamma_1-\lambda_N}\frac{\partial}{\partial\nu}\phi_N(x)\\
\frac{1}{\gamma_2--\delta-\lambda_1}\frac{\partial}{\partial\nu}\phi_1(x) & \frac{1}{\gamma_2-\lambda_2}\frac{\partial}{\partial\nu}\phi_2(x) &...& \frac{1}{\gamma_2-\lambda_N}\frac{\partial}{\partial\nu}\phi_N(x)\\
...&...&...&...\\
\frac{1}{\gamma_N-\delta-\lambda_1}\frac{\partial}{\partial\nu}\phi_1(x) & \frac{1}{\gamma_N-\lambda_2}\frac{\partial}{\partial\nu}\phi_2(x) &...& \frac{1}{\gamma_N-\lambda_N}\frac{\partial}{\partial\nu}\phi_N(x)\end{array}\right)\end{equation} and \begin{equation}\label{AA!}A=(B_1+B_2+...+B_N)^{-1},\end{equation}where 
\begin{equation}\label{bo!} B_k:=\Lambda_{\gamma_k}\textbf{B}\Lambda_{\gamma_k},\ k=1,...,N,\end{equation} for
\begin{equation}\label{e9!}\textbf{B}:=\left(\begin{array}{cccc}\left<\frac{\partial}{\partial \nu}\phi_1,\frac{\partial}{\partial\nu}\phi_1\right>_0& \left<\frac{\partial}{\partial \nu}\phi_1,\frac{\partial}{\partial\nu}\phi_2\right>_0&...&\left<\frac{\partial}{\partial \nu}\phi_1,\frac{\partial}{\partial\nu}\phi_N\right>_0\\
\left<\frac{\partial}{\partial \nu}\phi_2,\frac{\partial}{\partial\nu}\phi_1\right>_0& \left<\frac{\partial}{\partial \nu}\phi_2,\frac{\partial}{\partial\nu}\phi_2\right>_0&...&\left<\frac{\partial}{\partial \nu}\phi_2,\frac{\partial}{\partial\nu}\phi_N\right>_0\\
...&...&...&...\\
\left<\frac{\partial}{\partial \nu}\phi_N,\frac{\partial}{\partial\nu}\phi\right>_0&\left<\frac{\partial}{\partial \nu}\phi_N,\frac{\partial}{\partial\nu}\phi_3\right>_0&...&\left<\frac{\partial}{\partial \nu}\phi_N,\frac{\partial}{\partial\nu}\phi_N\right>_0\end{array}\right)\end{equation}and
\begin{equation} \Lambda_{\gamma_k}:=\left(\begin{array}{cccc}\frac{1}{\gamma_k-\delta-\lambda_1}&0&...&0\\
0&\frac{1}{\gamma_k-\lambda_2}&...&0
\\...&...&...&...\\
0&0&...&\frac{1}{\gamma_k-\lambda_N} \end{array}\right),\ k=1,...,N.\end{equation}
By the above definitions, taking into account that $\lambda_1+\delta,\lambda_2,...,\lambda_N$ are mutually distinct and $\gamma_1,\gamma_2,...,\gamma_N$ are also mutually distinct, one may show that the matrix $A$ is well-defined, that is, the sum $B_1+B_2+...+B_N$ is indeed invertible (this follows similarly as in Lemma \ref{l2} and Lemma \ref{l1} in the Appendix).

The main results, in the context of general eigenvalues, are stated below.
\begin{theorem}\label{T1!}Assume that $f$ satisfies $(f_1)$. Then, the feedback $u$, given by (\ref{u!}), exponentially stabilizes the linearized system (\ref{e51}). More precisely,  the solution $y$ to the system
 \begin{equation}\label{e501!}\left\{\begin{array}{l}y_t(t,x)-\Delta y(t,x)+f_y(x,y_e(x))y(t,x)=0,\ t>0,\ x\in\Omega,\\
y(t,x)=\left<T\ A\left(\begin{array}{c}\left<y(t),\phi_1\right>\\
\left<y(t),\phi_2\right>\\...\\\left<y(t),\phi_N\right>\end{array}\right),\left(\begin{array}{c}1\\ 1\\ ... \\1\end{array}\right)\right>_{N},\ t>0,\ x\in\Gamma_1,\\
\frac{\partial}{\partial\nu}y(t,x)=0,\ t>0,\ x\in \Gamma_2,\\
y(0,x)=y_o,\ x\in\Omega,
 \end{array}\right.\ \end{equation}
 satisfies the exponential decay \begin{equation}\label{mu!}\|y(t)\|^2\leq Ce^{-\mu t}\|y_o\|^2,\ t\geq0,\end{equation}for a prescribed $\mu>0$, and a constant $C>0$. Here $T$ is introduced in relation (\ref{T!}), while $A$ is introduced in relation (\ref{AA!}). $\|\cdot\|$ denotes the norm in $L^2(\Omega)$.
\end{theorem}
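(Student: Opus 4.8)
The plan is to run the proof of Theorem~\ref{T1} essentially verbatim, the only genuinely new point being to bookkeep the perturbation $\delta\langle\cdot,\phi_1\rangle\phi_1$ introduced in (\ref{e4!}). Its role is purely spectral: it replaces the coincident pair $\lambda_1=\lambda_2$ by the \emph{effective} values $\lambda_1+\delta,\lambda_2,\dots,\lambda_N$, which are mutually distinct once $\delta=\gamma_1^{-4}$ and $\gamma_1$ is large; together with the distinctness of $\gamma_1,\dots,\gamma_N$ this is exactly the configuration under which the appendix lemmas (the analogues of Lemmas~\ref{l1} and~\ref{l2}) guarantee that $B_1+\cdots+B_N$ is invertible, so that $A$ in (\ref{AA!}) is well defined. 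Hypothesis $(H)$ was never used for anything else, so the construction simply restores, artificially, the one algebraic property on which the whole argument rests.

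First I would set up the lifting, all computations being justified in the mild-solution framework of Theorem~\ref{T1}. Write $P$ for the orthogonal projection onto $X_u:=\mathrm{span}\{\phi_1,\dots,\phi_N\}$ and $Q=I-P$, put $\Phi(t):=(\langle y(t),\phi_1\rangle,\dots,\langle y(t),\phi_N\rangle)^\top$, and decompose the feedback of (\ref{e501!}) as $u=u_1+\cdots+u_N$ with $u_k=[TA\Phi]_k$. Lifting the boundary data by the modified Dirichlet operators of (\ref{e4!}), I set $\eta:=\sum_{k=1}^N D_{\gamma_k}u_k$ and $z:=y-\eta$, so that $z$ carries homogeneous boundary conditions. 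Testing (\ref{e4!}) against $\phi_j$ and integrating by parts (the only surviving boundary term is $\langle\alpha,\frac{\partial}{\partial\nu}\phi_j\rangle_0$) gives the identity $\langle D_{\gamma_k}u_k,\phi_j\rangle=-[B_kA\Phi]_j$, in which the denominator $\gamma_k-\delta-\lambda_1$ appears precisely for $j=1$, matching the definitions (\ref{T!}), (\ref{bo!}) of $T$ and $B_k$. Summing on $k$ and using $(B_1+\cdots+B_N)A=I$ telescopes this to $P\eta=-Py$. Projecting the closed-loop equation onto $X_u$ and inserting $-\Delta\eta+a\eta$ computed from (\ref{e4!}) — whose extra term produces a single rank-one correction — then yields the reduced finite-dimensional system
\begin{equation*}\dot\Phi=-M_\delta\Phi,\qquad M_\delta:=\Big(\sum_{k=1}^N\gamma_kB_k\Big)\Big(\sum_{k=1}^NB_k\Big)^{-1}-\delta\,e_1e_1^\top,\end{equation*}
$e_1$ being the first canonical vector; consistency with the direct projection $\dot\Phi_j=-\lambda_j\Phi_j-\langle u,\frac{\partial}{\partial\nu}\phi_j\rangle_0$ can also be checked.

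For the unperturbed matrix $M:=(\sum_k\gamma_kB_k)(\sum_kB_k)^{-1}$ I would show $\mathrm{spec}(M)\subset[\rho,\infty)$. Each $B_k=\Lambda_{\gamma_k}\mathbf B\Lambda_{\gamma_k}$ is a congruence of the Gram matrix $\mathbf B\succeq0$, hence positive semidefinite, while $S:=\sum_kB_k$ is positive definite by the invertibility above; since $\gamma_k>\rho$ one has $\sum_k(\gamma_k-\rho)B_k\succeq0$, i.e. $MS\succeq\rho S$. As $MS=\sum_k\gamma_kB_k$ and $S$ are symmetric with $S>0$, $M$ is similar to the symmetric matrix $S^{-1/2}(MS)S^{-1/2}\succeq\rho I$, so its spectrum is real and $\geq\rho$. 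Consequently $M_\delta$ is an $O(\delta)=O(\gamma_1^{-4})$ perturbation of a matrix whose spectrum is bounded away from the imaginary axis; by continuity of the eigenvalues, $\mathrm{Re}\,\mathrm{spec}(M_\delta)\geq\rho-C\delta\geq\mu$ once $\gamma_1$ is large enough, whence $|\Phi(t)|\leq Ce^{-\mu t}|\Phi(0)|$.

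It remains to control $Qz$. Because the perturbation term $\delta\langle y,\phi_1\rangle\phi_1$ lives in $X_u$, it is annihilated by $Q$; thus the equation for $Qz$ is forced only by $\Phi$-dependent quantities and is governed by $-\mathcal A|_{X_s}$, whose spectrum lies in $(-\infty,-\rho]$. A variation-of-constants estimate against the already exponentially decaying $\Phi$ then gives $\|Qz(t)\|\leq Ce^{-\mu t}\|y_o\|$, and since the bound on $D_{\gamma_k}$ yields $\|\eta(t)\|\leq C|\Phi(t)|$, reassembling $y=z+\eta=Py+Qz+Q\eta$ produces the decay (\ref{mu!}). The one delicate point — the main obstacle — is the spectral perturbation in the third step: $M$ is not symmetric, so subtracting the rank-one term $\delta e_1e_1^\top$ must be shown not to push any eigenvalue of $M_\delta$ below the prescribed threshold $\mu$. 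This is precisely why the size of $\delta$ is tied quantitatively to $\gamma_1$ through $\delta=\gamma_1^{-4}$, and why that choice has to be made compatibly with both $\gamma_1>\rho$ (so that $\mathrm{spec}(M)\geq\rho$) and $\delta>0$, $\lambda_1+\delta\neq\lambda_2$ (so that $A$ exists).
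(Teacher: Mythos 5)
Your reduction coincides, after an innocuous change of variable, with the paper's own proof: the lifting by the modified Dirichlet operators, the identity $\left<D_{\gamma_k}u_k,\phi_j\right>=-[B_kA\Phi]_j$, the telescoping $P\eta=-Py$, and the reduced system $\dot\Phi=-M_\delta\Phi$ are exactly the paper's relations (\ref{e6!}), (\ref{e17!!}) and the analogue of (\ref{e44ee}) (the paper works with $\mathcal{Z}=2\Phi$ and writes the drift as $-\gamma_1 I+\sum_{k\geq2}(\gamma_1-\gamma_k)B_kA$, which is algebraically the same as your $-\bigl(\sum_k\gamma_kB_k\bigr)A$, modulo an inconsequential sign on the rank-one $\delta$-term). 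Your localization $\mathrm{spec}(M)\subset[\rho,\infty)$ via the symmetric matrix $S^{-1/2}(MS)S^{-1/2}$, $S:=\sum_kB_k$, is also correct. Where you depart from the paper is the finish: the paper runs a Lyapunov/Gronwall estimate on $\|A^{1/2}\mathcal{Z}\|_N^2$, leading to (\ref{mun1}), and then — and this is the actual core of its proof — establishes the quantitative asymptotics $\|A\|\leq CN^2\gamma_1^4$ and $1/\lambda_1(A)\rightarrow0$ as $\gamma_1\rightarrow\infty$, through the adjugate/determinant analysis of $S$ (entries of size $\gamma_1^{-2}$, adjugate entries of size $\gamma_1^{-2(N-1)}$, determinant bounded below against $\gamma_1^{-2(N+1)}$). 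It is only these estimates that make the coupling $\delta=\gamma_1^{-4}$ do its job.

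This is precisely where your argument has a genuine gap. You invoke ``continuity of the eigenvalues'' to conclude $\mathrm{Re}\,\mathrm{spec}(M_\delta)\geq\rho-C\delta$, but $M$ is not a fixed matrix: it depends on $\gamma_1$, and $\delta=\gamma_1^{-4}$ is sent to zero simultaneously with $\gamma_1\rightarrow\infty$, so a perturbation constant $C$ independent of $\gamma_1$ is not available for free. Since $M$ is non-normal, Bauer--Fike-type bounds carry the condition number of the diagonalizing transformation; the natural one here is $\kappa(S^{1/2})=\sqrt{\lambda_{\max}(S)/\lambda_{\min}(S)}$, and $\lambda_{\min}(S)=1/\|A\|$ has no a priori lower bound uniform in $\gamma_1$ — recall that the whole point of the paper is that the Gram matrix $\mathbf{B}$ may be singular, so $S$ may degenerate as the $\gamma_k$'s grow. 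You correctly flag this as ``the delicate point'' and correctly tie it to the choice $\delta=\gamma_1^{-4}$, but you never supply the bound that makes the claim true. What is needed is exactly the paper's estimate $\|A\|\leq C\gamma_1^4$, i.e. $\lambda_{\min}(S)\geq c\gamma_1^{-4}$: with it, $\kappa(S^{1/2})\leq C\gamma_1$, and the rank-one perturbation moves the spectrum by at most $C\gamma_1\delta=C\gamma_1^{-3}$, which closes your argument (and in fact yields a decay rate of order $\gamma_1$, not merely $\rho$). Without it, your third step is unjustified; and since this matrix-asymptotics lemma — not the algebraic reduction — is the substance of the paper's proof of Theorem \ref{T1!}, the proposal as written is missing the decisive ingredient.
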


And, concerning the nonlinear equation.
\begin{theorem}\label{T2!} Let $1\leq d\leq 3$ and $f$ obeying assumption $(f_2)$. The solution to the closed-loop nonlinear system
 \begin{equation}\left\{\begin{array}{l}p_t(t,x)-\Delta p(t,x)+f(x,p)=0,\ t>0,\ x\in\Omega,\\
p(t,x)=\left<T\ A\left(\begin{array}{c}\left<p(t)-p_e,\phi_1\right>\\
\left<p(t)-p_e,\phi_2\right>\\...\\\left<p(t)-p_e,\phi_N\right>\end{array}\right),\left(\begin{array}{c}1\\ 1\\ ... \\1\end{array}\right)\right>_{N}+p_e(x),\ t>0,\ x\in\Gamma_1,\\
\frac{\partial}{\partial\nu}p(t,x)=0,\ t>0,\ x\in \Gamma_2,\\
p(0,x)=p_o,\ x\in\Omega,
 \end{array}\right.\ \end{equation}
 satisfies the exponential decay $$\|p(t)-p_e\|^2\leq Ce^{-\mu t}\|p_o-p_e\|^2,\ t\geq0,$$ for a prescribed $\mu>0$, and a constant $C>0$, provided that $\|p_o-p_e\|$ is small enough. Here $T$ is introduced in relation (\ref{T!}), while $A$ is introduced in relation (\ref{AA!}).
\end{theorem}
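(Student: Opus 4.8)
The plan is to deduce this local nonlinear result from the already established linear stabilization (Theorem \ref{T1!}) by a contraction mapping argument, following the same scheme as the proof of Theorem \ref{T2}; the only structural difference is that the closed-loop semigroup is now generated by the perturbed operator underlying (\ref{e4!}). First I would pass to the fluctuation variable $w:=p-p_e$ and Taylor-expand the nonlinearity around the equilibrium, writing
\begin{equation*}
f(x,w+p_e)-f(x,p_e)=f_p(x,p_e)\,w+F(w),\qquad F(w):=\int_0^1(1-s)\,f_{pp}(x,p_e+sw)\,w^2\,ds .
\end{equation*}
Then $w$ solves the interior equation $w_t-\Delta w+f_p(x,p_e)\,w=-F(w)$ subject to the \emph{same} boundary feedback as in (\ref{e501!}) on $\Gamma_1$ (which is linear in $w$, since $\langle p-p_e,\phi_i\rangle=\langle w,\phi_i\rangle$) together with the homogeneous Neumann condition on $\Gamma_2$. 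Denoting by $S(t)$ the analytic $C_0$-semigroup generated by the closed-loop linear operator associated with (\ref{e501!}), Theorem \ref{T1!} furnishes the decay $\|S(t)z\|\le C\,e^{-\mu t/2}\|z\|$. Since $-F(w)$ enters only as an interior forcing term, the problem admits the mild formulation
\begin{equation*}
w(t)=S(t)w_o-\int_0^t S(t-s)\,F(w(s))\,ds,\qquad w_o=p_o-p_e .
\end{equation*}

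Next I would solve this integral equation by Banach's fixed point theorem in an exponentially weighted space of trajectories, of the representative type
\begin{equation*}
\mathcal{X}=\Big\{w:\ \sup_{t\ge0}e^{\mu t/2}\|w(t)\|_{L^2(\Omega)}<\infty,\ \ \int_0^\infty e^{\mu t}\|w(t)\|_{H^1(\Omega)}^2\,dt<\infty\Big\},
\end{equation*}
whose weights are tuned so that the Duhamel term is controlled. The map $\Phi(w)(t):=S(t)w_o-\int_0^t S(t-s)F(w(s))\,ds$ is estimated by combining the decay of $S$ from Theorem \ref{T1!} with the parabolic smoothing $\|\mathcal{A}^{\theta}S(t)\|\le C\,t^{-\theta}e^{-\mu t/2}$, valid because the closed-loop generator is sectorial and its analyticity and spectral bound are unaffected by the bounded finite-rank perturbation $\delta\langle\cdot,\phi_1\rangle\phi_1$ used in (\ref{e4!}). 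The linear contribution obeys $\|S(\cdot)w_o\|_{\mathcal{X}}\le C\|w_o\|$, so the small ball is reached once $\|w_o\|=\|p_o-p_e\|$ is small enough, while the nonlinear contribution is handled through the estimate below.

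The crux, and the step I expect to be the main obstacle, is the mapping and Lipschitz property of $F$ in the norms encoded by $\mathcal{X}$. Writing $F(w)-F(\bar w)=(w-\bar w)\int_0^1\big[f_p(x,p_e+\bar w+s(w-\bar w))-f_p(x,p_e)\big]\,ds$ and using $(f_2)$ to control the resulting factor by $f_{pp}$, one obtains pointwise bounds in terms of $|w-\bar w|$ times powers $|w|^{1+\alpha_i},|\bar w|^{1+\alpha_i}$ and $|w|,|\bar w|$. It is exactly here that the restriction $1\le d\le3$ (with $\alpha_i\le1$ when $d=3$) is used: the Sobolev embeddings $H^1(\Omega)\hookrightarrow L^{p}(\Omega)$ then make all these products integrable, yielding a local Lipschitz estimate of the form $\|F(w)-F(\bar w)\|_{\ast}\le C\big(\|w\|_{\mathcal{X}}+\|\bar w\|_{\mathcal{X}}+\cdots\big)\|w-\bar w\|_{\mathcal{X}}$ in the appropriate forcing norm, with a constant that vanishes as the radius tends to $0$. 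Inserting this into the Duhamel term and checking the time-integrability of the smoothing kernel against the exponential weight shows that $\Phi$ maps the small ball into itself and is a contraction for $\|p_o-p_e\|$ small. The unique fixed point is the sought solution $w=p-p_e$, and its membership in $\mathcal{X}$ yields precisely $\|p(t)-p_e\|^2\le C\,e^{-\mu t}\|p_o-p_e\|^2$. As the perturbed Dirichlet construction in (\ref{e4!}) changes neither the analyticity nor the decay rate of the closed-loop semigroup, the whole argument is identical to that of Theorem \ref{T2}, now invoking Theorem \ref{T1!} in place of Theorem \ref{T1}.
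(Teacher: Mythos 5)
Your proposal is correct and takes essentially the same route the paper itself prescribes: the paper gives no self-contained proof of Theorem \ref{T2!}, stating only that one may proceed as in \cite[Theorem 4.2]{barbu1} or \cite[Theorem 1.1]{lasi}, and those arguments are precisely the scheme you lay out --- pass to $w=p-p_e$, Taylor-expand $f$ so the quadratic remainder $F(w)$ (controlled via $(f_2)$ and the Sobolev embeddings for $1\le d\le 3$) appears as an interior forcing, write the mild (Duhamel) formulation with the exponentially stable closed-loop semigroup furnished by Theorem \ref{T1!}, and run a contraction argument in an exponentially weighted trajectory space for $\|p_o-p_e\|$ small. Your write-up simply supplies the details the paper delegates to the references, so there is nothing substantive to object to.
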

The proofs will be given latter, in Section \ref{S}.
\section{Examples}
In order to illustrate the acquired results, let us consider some examples arising from thermodynamics and biology. These examples are treated briefly, since we do not want to oversize the present work. However,  more details and also   numerical simulations can be found in the work \cite{liu} on the stabilization of the Fischer's equation by similar proportional feedbacks as above.

\textit{Example 1.} We consider first a problem discussed in \cite{bal}, and also in \cite{barbu1}. More precisely, the stabilization of the heat equation on the rod $(0,1)$
\begin{equation}\label{e60}\left\{\begin{array}{l}y_t-y_{xx}-\lambda y,\ x\in (0,1),\ t>0,\\
y_x(t,0)=0,\ y(t,1)=u(t),\ t>0,\end{array}\right.\
\end{equation}with the Dirichlet actuation $u$ in $x=1$ and with $\lambda$ a positive constant parameter. First of all, we notice that the stabilizing results using the backsteppting method in \cite{bal} work for arbitrarly level of instability, while in \cite{barbu1}  they are applicable under the condition that the parameter $\lambda$ do not exceed the bound $\left(\frac{3\pi}{2}\right)^2$.  In the present paper, Theorem \ref{T1} above holds true without any a priori condition on $\lambda$. This is indeed so, because, in our case, the operator $\mathcal{A}y=-y''-\lambda y,\ \forall y\in\mathcal{D}(\mathcal{A})=\left\{y\in H^2(0,1): \ y'(0)=y(1)=0\right\}$ has the eigenvalues $\lambda_j=\frac{(2j-1)^2\pi^2}{4}-\lambda$ with the corresponding eigenfunctions $\phi_j=\cos\frac{(2j-1)\pi}{2} x,\ j=1,2,...$. It is clear that hypothesis (H) holds true for any $N\in\mathbb{N}$. Let us pick some $N$ such that $\lambda_{j}>\rho>0,\ \forall j=N+1,N+2,....$ Since system (\ref{e60}) is linear and Theorem \ref{T1} is applicable we get that the feedback
$$u=\left<TA\left(\begin{array}{c}\int_0^1y(x)\cos\frac{\pi}{2}xdx\\ 
\int_0^1y(x)\cos\frac{3\pi}{2}xdx\\
...\\
\int_0^1y(x)\cos\frac{(2N-1)\pi}{2}xdx\end{array}\right),\left(\begin{array}{c}1\\ 1\\ ... \\ 1\end{array}\right)\right>_N,$$ where
$$T=\left(\begin{array}{cccc}-\frac{2\pi}{l_1-\pi^2} & \frac{6\pi}{l_1-(3\pi)^2} &...& (-1)^N\frac{2(2N-1)\pi}{l_1-[(2N-1)\pi]^2}\\
-\frac{2\pi}{l_2-\pi^2} & \frac{6\pi}{l_2-(3\pi)^2} &...& (-1)^N\frac{2(2N-1)\pi}{l_2-[(2N-1)\pi]^2}\\
...&...&...&...\\
-\frac{2\pi}{l_N-\pi^2} & \frac{6\pi}{l_N-(3\pi)^2} &...& (-1)^N\frac{2(2N-1)\pi}{l_N-[(2N-1)\pi]^2}\end{array}\right),$$ and
$$\begin{aligned}&A=\\& 
\displaystyle= \left\{\sum_{k=1}^N4\pi^2 \left(\begin{array}{cccc}\frac{1}{(l_k-\pi^2)^2}&  \frac{-3}{[l_k-\pi^2][l_k-(3\pi)^2]}&...& \frac{(-1)^{N+1}(2N-1)}{[l_k-\pi^2][l_k-(2N-1)^2\pi^2]}\\
\frac{-3}{[l_k-\pi^2][l_k-(3\pi)^2]}&\frac{9}{[l_k-(3\pi)^2]^2}&...&\frac{(-1)^N3(2N-1)}{[l_k-(3\pi)^2][l_k-(2N-1)^2\pi^2]}\\
...&...&...&...\\
\frac{(-1)^{N+1}(2N-1)}{[l_k-\pi^2][l_k-(2N-1)^2\pi^2]}& -\frac{(-1)^{N+1}3(2N-1)}{[l_k-(3\pi)^2][l_k-(2N-1)^2\pi^2]}
&...&\frac{(2N-1)^2}{[l_k-(2N-1)^2\pi^2]^2}\end{array} \right)\right\}^{-1},\end{aligned}$$ (here we have denoted by $l_k:=4(\gamma_k+\lambda),\ k=1,...,N$), assures the global exponential stability of the null solution of (\ref{e60}). Here, $\rho<\gamma_1<\gamma_2<...<\gamma_N$ are any positive constants.

\textit{Example 2.} We consider now the classical Fitzhugh-Nagumo equation that describes the dynamics of electrical potential across cell membrane (for details, see \cite{fiz})
\begin{equation}\label{e62}\left\{\begin{array}{l} y_t-y_{xx}+y(y-1)(y-a)=0,\ 0<x<l,\ t>0,\\
y_x(t,l)=0,\ y(t,0)=u(t),\ t>0,\end{array}\right.\ \end{equation}where $0<a<\frac{1}{2}$. It is known that the equilibrium $y_e=a$ is unstable. The linearized operator $\mathcal{A}$ is given by
$$\mathcal{A}y=-y''-a(1-a)y,\ \forall y\in\mathcal{D}(\mathcal{A})=\left\{y\in H^2(0,l):\ y(0)=y'(l)=0\right\}.$$It has the simple eigenvalues $\lambda_j=\frac{(2j-1)^2\pi^2}{4l^2}-a(1-a),\ j=1,2,...,$ with the corresponding eigenfunctions $\phi_j=\sin\frac{(2j-1)\pi}{2l}x,\ j=1,2,....$ Since hypothesis (H) is full filed for any $N\in\mathbb{N}$, we chose one  such that $\lambda_j>\rho>0,\ j=N+1,N+2,...$. It is easy to see that $f(y)=y(y-1)(y-a)$ obeys the hypothesis of Theorem \ref{T2}, and so we get that the feedback
$$u=\left<TA\left(\begin{array}{c}\int_0^l(y(x)-a)\sin\frac{\pi}{2l}xdx\\ 
\int_0^l(y(x)-a)\sin\frac{3\pi}{2l}xdx\\
...\\
\int_0^l(y(x)-a)\sin\frac{(2N-1)\pi}{2l}xdx\end{array}\right),\left(\begin{array}{c}1\\ 1\\ ... \\ 1\end{array}\right)\right>_N+a,$$ where
$$T=\left(\begin{array}{cccc}\frac{2l\pi}{l_1-\pi^2}&\frac{6l\pi}{l_1-(3\pi)^2}&...&\frac{2(2N-1)l}{l_1-(2N-1)^2\pi^2]}\\
\frac{2l\pi}{l_2-\pi^2}&\frac{6l\pi}{l_2-(3\pi)^2}&...&\frac{2(2N-1)l}{l_2-(2N-1)^2\pi^2]}\\
...&...&...&...\\
\frac{2l\pi}{l_N-\pi^2}&\frac{6l\pi}{l_N-(3\pi)^2}&...&\frac{2(2N-1)l}{l_N-(2N-1)^2\pi^2]}\end{array}\right),$$with $l_k:=4l^2(\gamma_k+a(1-a)),\ k=1,2,...,N;$ and
$$A=\left\{(2l)^2\pi^2\sum_{k=1}^N\left(\begin{array}{cccc}\left(\frac{1}{l_k-\pi^2}\right)^2& \frac{3}{[l_k-\pi^2][l_k-(3\pi)^2]}&...&\frac{2N-1}{[l_k-\pi^2][l_k-(2N-1)^2\pi^2]}\\
\frac{3}{[l_k-\pi^2][l_k-(3\pi)^2]}&\left( \frac{3}{l_k-(3\pi)^2}\right)^2&...& \frac{3(2N-1)}{[l_k-(3\pi)^2][l_k-(2N-1)^2\pi^2]}\\
...&...&...&...\\
\frac{2N-1}{[l_k-\pi^2][l_k-(2N-1)^2\pi^2]}& \frac{3(2N-1)}{[l_k-(3\pi)^2][l_k-(2N-1)^2\pi^2] }&... &\left(\frac{2N-1}{l_k-(2N-1)^2\pi^2}\right)^2
\end{array}\right)\right\}^{-1},$$ locally stabilizes the solution $y_e=a$ in (\ref{e62}).

\textit{Example 3.} Finally, we consider the periodic heat equation in $(0,\pi)^2$
\begin{equation}\label{e63}\left\{\begin{array}{l}y_t-\Delta y-\mu y=0,\ x\in (0,\pi)^2,\ t>0,\\
y(t,x_1,0)=u(t),\ y_{x_2}(t,x_1,\pi)=0,\ x_1\in(0,\pi),\\
 y_{x_1}(t,0,x_2)=y_{x_1}(t,\pi,x_2)=0,\ x_2\in (0,\pi).\end{array}\right.\ \end{equation}In this case the operator
 $$\mathcal{A}y=-\Delta y-\mu y,$$
 for all $$y\in\mathcal{D}(\mathcal{A})=\left\{y\in H^2((0,\pi)^2): y(x_1,0)=y_{x_2}(x_1,\pi)=0,\ y_{x_1}(0,x_2)=y_{x_1}(\pi,x_2)=0 \right\},$$ has the eigenvalues
 $$\lambda_k=k_1^2+\left(\frac{2k_2+1}{2}\right)^2-\mu,\ \forall k=(k_1,k_2)\in \mathbb{N}^2,$$ with the corresponding eigenfunctions
 $$\phi_k=\cos k_1x_1\sin\frac{2k_2+1}{2}x_2,\ \forall k\in\mathbb{N}^2.$$ Ordering the eigenvalues set as an increasing sequence and redefine them, we have
 $\lambda_1= 1.25-\mu,\ \phi_1=\cos x_1\sin\frac{1}{2}x_2;\ \lambda_2=3.25-\mu,\ \phi_2=\cos x_1\sin\frac{3}{2}x_2;\ \lambda_3=4.25-\mu,\ \phi_3=\cos2x_1\sin\frac{1}{2}x_2;\ \lambda_4=6.25-\mu,\ \phi_4=\cos2x_1\sin\frac{3}{2}x_2;\ \lambda_5=7.25-\mu,\phi_5=\cos x_1\sin\frac{5}{2}x_2;\ \lambda_6=9.25-\mu,\phi_6=\cos 3x_1\sin\frac{1}{2}x_2;\ \lambda_7=10.25-\mu,\phi_7=\cos2x_1\sin\frac{5}{2}x_2;\ \lambda_8=11.25-\mu,\phi_8=\cos3x_1\sin\frac{3}{2}x_2;\ \lambda_9=13.25-\mu,\phi_9=\cos x_1\sin \frac{7}{2}x_2;$ $\lambda_{10}=15.25-\mu,\phi_{10}=\cos3x_1\sin\frac{5}{2}x_2;\ \lambda_{11}=16.25-\mu,\phi_{11}=\cos4x_1\sin\frac{1}{2}x_2;\ \lambda_{12}=16.25-\mu,\phi_{12}=\cos2x_1\sin\frac{7}{2}x_2. $ It is clear that since $\lambda_{11}=\lambda_{12}$,  hypothesis $(H)$ fails to hold. So, we apply  this time the result in Theorem \ref{T1!}. Thus, the corresponding control $u$ of the form (\ref{u!})  assures the stability of the null solution in the system (\ref{e63}). We will not write it down here explicitly since it involves three  11th  order square matrices.

\section{Proofs of the results}\label{S}
\textbf{
\textit{Proof of Theorem \ref{T1}}.} The proof is based on similar ideas with those ones in the proof of \cite[Theorem 4.1]{barbu1}. We shall  show only the fact that the first $N$ modes of the solution $y$ are exponentially decaying. The rest  will be omitted because it can be deduced by almost identical arguments as in the proof of  \cite[Theorem 4.1]{barbu1}. The main difference between the present proof and that one in \cite{barbu1} is that while in \cite{barbu1}, the stability of the unstable modes is showed for each one independently, here we consider the  system formed by them, and show that it is  stable. That is why, the present feedback involves matrices, rather than vectors as it does in \cite{barbu1}.

Firstly, recall the Dirichlet operators $D_{\gamma_i},\ i=1,...,N,$ defined after relation (\ref{e4}). In the following, we  need to compute the scalar product $\left<D_{\gamma_j}\alpha,\phi_i\right>$, for all $i,j\in\left\{1,...,N\right\}$. To this end, we  scalarly multiply equation (\ref{e4}), corresponding to $\gamma_j$, by $\phi_i$, to get, via Green's formula, that
 \begin{equation}\label{e42}\begin{aligned}0&=-\int_\Omega \Delta y(x)\phi_i(x)dx+\int_\Omega a(x)y(x)\phi_i(x)dx+
 (\gamma_j-2\lambda_i)\int_\Omega y(x)\phi_i(x)dx\\&
 =\int_{\Gamma_1}\alpha(x)\frac{\partial}{\partial\nu}\phi_i(x)d\sigma+\int_\Omega y(x)(-\Delta\phi_i(x)+a(x)\phi_i(x))dx+(\gamma_j-2\lambda_i)\int_\Omega y(x)\phi_i(x)dx.\end{aligned}\end{equation}It yields that
\begin{equation}\label{e6}\begin{aligned}\left<D_{\gamma_j}\alpha,\phi_i\right>=-\frac{1}{\gamma_j-\lambda_i}\left<\alpha,\frac{\partial}{\partial\nu}\phi_i\right>_0,\ i,j=1,...,N.\end{aligned}\end{equation}

Next, we introduce the feedbacks
\begin{equation}v_k(t,x)=\left<A\left(\begin{array}{c}\left<y(t),\phi_1\right>\\
\left<y(t),\phi_2\right>\\...\\\left<y(t),\phi_N\right>\end{array}\right),\left(\begin{array}{c}\frac{1}{\gamma_k-\lambda_1}\frac{\partial}{\partial \nu}\phi_1(x)\\ \frac{1}{\gamma_k-\lambda_2}\frac{\partial}{\partial \nu}\phi_2(x)\\ ... \\\frac{1}{\gamma_k-\lambda_N}\frac{\partial}{\partial \nu}\phi_N(x)\end{array}\right)\right>_N,\ t\geq0,\ x\in \Gamma_1,
\end{equation} for $k=1,2,...,N$; and $\textbf{v}=v_1+v_2+...+v_N$. It is easy to see that we have
$$ \textbf{v}=\left<T\ A\left(\begin{array}{c}\left<y(t),\phi_1\right>\\
\left<y(t),\phi_2\right>\\...\\\left<y(t),\phi_N\right>\end{array}\right),\left(\begin{array}{c}1\\ 1\\ ... \\1\end{array}\right)\right>_N,$$ that is exactly the boundary feedback plugged in (\ref{e501}). Here, $T$ is defined in (\ref{T}), while $A$ is defined in (\ref{AA}).

For latter purpose, we show  that
\begin{equation}\label{e8}\left(\begin{array}{c}\left<D_{\gamma_k}v_k,\phi_1\right>\\ \left<D_{\gamma_k}v_k,\phi_2\right>\\...\\\left<D_{\gamma_k}v_k,\phi_N\right>\end{array}\right)=-B_kA\left(\begin{array}{c}\left<y(t),\phi_1\right>\\
\left<y(t),\phi_2\right>\\...\\\left<y(t),\phi_N\right>\end{array}\right),\end{equation} where  $B_k$ are introduced in (\ref{bo}) above, for $k=1,...,N$. This is indeed so. We have
$$\left<D_{\gamma_k}v_k,\phi_i\right>=\left<A \left(\begin{array}{c}\left<y(t),\phi_1\right>\\
\left<y(t),\phi_2\right>\\...\\\left<y(t),\phi_N\right>\end{array}\right),\left(\begin{array}{c}\frac{1}{\gamma_k-\lambda_1}\left<D_{\gamma_k}\frac{\partial}{\partial \nu}\phi_1,\phi_i\right>\\ \frac{1}{\gamma_k-\lambda_2}\left<D_{\gamma_k}\frac{\partial}{\partial \nu}\phi_2,\phi_i\right>\\ ... \\\frac{1}{\gamma_k-\lambda_N}\left<D_{\gamma_k}\frac{\partial}{\partial \nu}\phi_N,\phi_i\right>\end{array}\right)\right>_N,\ i=1,...,N.$$It then follows by relation (\ref{e6}),  that
\begin{equation}\label{e7}\left<D_{\gamma_k}v_k,\phi_i\right>=\left<A \left(\begin{array}{c}\left<y(t),\phi_1\right>\\
\left<y(t),\phi_2\right>\\...\\\left<y(t),\phi_N\right>\end{array}\right),\left(\begin{array}{c}-\frac{1}{(\gamma_k-\lambda_1)(\gamma_k-\lambda_i)}\left<\frac{\partial}{\partial \nu}\phi_1,\frac{\partial}{\partial\nu}\phi_i\right>_0\\ -\frac{1}{(\gamma_k-\lambda_2)(\gamma_k-\lambda_i)}\left<\frac{\partial}{\partial \nu}\phi_2,\frac{\partial}{\partial\nu}\phi_i\right>_0\\ ... \\-\frac{1}{(\gamma_k-\lambda_N)(\gamma_k-\lambda_i)}\left<\frac{\partial}{\partial \nu}\phi_N,\frac{\partial}{\partial\nu}\phi_i\right>_0\end{array}\right)\right>_N,\ i=1,...,N,\end{equation}from where we immediately obtain (\ref{e8}).

Now, let us return to the linear system
\begin{equation}\label{e11}\left\{\begin{array}{l}y_t(t,x)-\Delta y(t,x)+a(x)y(t,x)=0,\ t>0,\ x\in\Omega,\\
y(t,x)=\textbf{v}(t,x)=v_1(t,x)+...+v_N(t,x),\ t>0,\ x\in \Gamma_1,\\
\frac{\partial}{\partial\nu}y(t,x)=0,\ t>0,\ x\in \Gamma_2,\\
y(0,x)=y_o(x),\ x\in\Omega.\end{array} \right.\ \end{equation}
Let us denote by $z(t,x):=y(t,x)-D_{\gamma_1}v_1(t,x)-...-D_{\gamma_N}v_N(t,x),\ t\geq0,\ x\in\Omega$. We claim that the feedbacks $v_k,\ k=1,...,N,$  can be expressed only in terms of $z$, as
\begin{equation}\label{e13}v_k(t,x)=\frac{1}{2}\left<A\left(\begin{array}{c}\left<z(t),\phi_1\right>\\
\left<z(t),\phi_2\right>\\...\\\left<z(t),\phi_N\right>\end{array}\right),\left(\begin{array}{c}\frac{1}{\gamma_k-\lambda_1}\frac{\partial}{\partial \nu}\phi_1\\ \frac{1}{\gamma_k-\lambda_2}\frac{\partial}{\partial \nu}\phi_2\\ ... \\\frac{1}{\gamma_k-\lambda_N}\frac{\partial}{\partial \nu}\phi_N\end{array}\right)\right>_N.
\end{equation} To see this we do the following straightforward computations
$$\begin{aligned}&\frac{1}{2}\left<A\left(\begin{array}{c}\left<z(t),\phi_1\right>\\
\left<z(t),\phi_2\right>\\...\\\left<z(t),\phi_N\right>\end{array}\right),\left(\begin{array}{c}\frac{1}{\gamma_k-\lambda_1}\frac{\partial}{\partial \nu}\phi_1\\ \frac{1}{\gamma_k-\lambda_2}\frac{\partial}{\partial \nu}\phi_2\\ ... \\\frac{1}{\gamma_k-\lambda_N}\frac{\partial}{\partial \nu}\phi_N\end{array}\right)\right>_N=\frac{1}{2}\left<A\left(\begin{array}{c}\left<y(t),\phi_1\right>\\
\left<y(t),\phi_2\right>\\...\\\left<y(t),\phi_N\right>\end{array}\right),\left(\begin{array}{c}\frac{1}{\gamma_k-\lambda_1}\frac{\partial}{\partial \nu}\phi_1\\ \frac{1}{\gamma_k-\lambda_2}\frac{\partial}{\partial \nu}\phi_2\\ ... \\\frac{1}{\gamma_k-\lambda_N}\frac{\partial}{\partial \nu}\phi_N\end{array}\right)\right>_N\\&
-\frac{1}{2}\sum_{i=1}^N \left<A\left(\begin{array}{c}\left<D_{\gamma_i}v_i(t),\phi_1\right>\\
\left<D_{\gamma_i}v_i(t),\phi_2\right>\\...\\\left<D_{\gamma_i}v_i(t),\phi_N\right>\end{array}\right), \left(\begin{array}{c}\frac{1}{\gamma_k-\lambda_1}\frac{\partial}{\partial \nu}\phi_1\\ \frac{1}{\gamma_k-\lambda_2}\frac{\partial}{\partial \nu}\phi_2\\ ... \\ \frac{1}{\gamma_k-\lambda_N}\frac{\partial}{\partial \nu}\phi_N\end{array}\right)\right>_N\\&
\text{ (taking into account relation (\ref{e8}))}\\&=\frac{1}{2}\left<\left[I+A(B_1+...+B_ N)\right]A\left(\begin{array}{c}\left<y(t),\phi_1\right>\\
\left<y(t),\phi_2\right>\\...\\\left<y(t),\phi_N\right>\end{array}\right),\left(\begin{array}{c}\frac{1}{\gamma_k-\lambda_1}\frac{\partial}{\partial \nu}\phi_1\\ \frac{1}{\gamma_k-\lambda_2}\frac{\partial}{\partial \nu}\phi_2\\ ... \\\frac{1}{\gamma_k-\lambda_N}\frac{\partial}{\partial \nu}\phi_N\end{array}\right)\right>_N\\&
=v_k,
\end{aligned},$$ since $A=(B_1+...+B_N)^{-1}$. Moreover, likewise in (\ref{e8}), we have now
\begin{equation}\label{e17}\left(\begin{array}{c}\left<D_{\gamma_k}v_k,\phi_1\right>\\ \left<D_{\gamma_k}v_k,\phi_2\right>\\...\\\left<D_{\gamma_k}v_k,\phi_N\right>\end{array}\right)=-\frac{1}{2}B_kA\left(\begin{array}{c}\left<z(t),\phi_1\right>\\
\left<z(t),\phi_2\right>\\...\\\left<z(t),\phi_N\right>\end{array}\right),\ k=1,...,N.\end{equation} 

Finally, equation (\ref{e11}) may be rewritten in terms of $z$ as follows
\begin{equation}\label{e15}\left\{\begin{array}{l}\begin{aligned}&z_t(t,x)+\mathcal{A}z(t,x)=R(\left<z,\phi_1\right>,...,\left<z,\phi_N\right>),\ t>0,\ x\in\Omega,\end{aligned}\\
z(0,x)=z_o(x),\ x\in\Omega,\end{array} \right.\ \end{equation}
where
\begin{equation}\label{r}\displaystyle R(\left<z,\phi_1\right>,...,\left<z,\phi_N\right>):=-\left(\sum_{i=1}^ND_{\gamma_i}v_i\right)_t-2\sum_{i,j=1}^N\lambda_j\left<D_{\gamma_i}v_i,\phi_j\right>\phi_j+\sum_{i=1}^N\gamma_iD_{\gamma_i}v_i.\end{equation}By (\ref{e17}), and using the fact that $A$ is the inverse of the sum of $B_k,\ k=1,...,N$, we see immediately that
\begin{equation}\label{rr}\begin{aligned}\left(\begin{array}{c}\left<R,\phi_1\right>\\ \left<R,\phi_2\right>\\ ... \\ \left<R,\phi_N\right>\end{array}\right)&=\frac{1}{2}\sum_{k=1}^NB_kA\mathcal{Z}_t+\Lambda\sum_{k=1}^NB_kA\mathcal{Z}-\frac{1}{2}\sum_{k=1}^N\gamma_kB_kA\mathcal{Z}\\&
=\frac{1}{2}\mathcal{Z}_t+\Lambda\mathcal{Z}-\frac{1}{2}\gamma_1\mathcal{Z}+\frac{1}{2}\sum_{k=2}^N(\gamma_1-\gamma_k)B_kA\mathcal{Z},\end{aligned}
\end{equation}
where we have denoted by $\mathcal{Z}(t):=\left(\begin{array}{c}\left<z(t),\phi_1\right>\\
\left<z(t),\phi_2\right>\\...\\\left<z(t),\phi_N\right>\end{array}\right),\ t\geq0;$ and by $\Lambda:=\left(\begin{array}{cccc}\lambda_1&0&...&0\\
0&\lambda_2&...&0\\
...&...&...&...\\
0&0&...&\lambda_N\end{array}\right).$

We have now, by (\ref{e15}), scalarly multiplied by $\phi_i,\ i=1,...,N$, together with (\ref{rr}), that
\begin{equation}\label{e44e}\mathcal{Z}_t+\Lambda\mathcal{Z}=\frac{1}{2}\mathcal{Z}_t+\Lambda\mathcal{Z}-\frac{1}{2}\gamma_1\mathcal{Z}+\frac{1}{2}\sum_{k=2}^N(\gamma_1-\gamma_k)B_kA\mathcal{Z},\ t>0;\ \mathcal{Z}(0)=\mathcal{Z}_o.\end{equation}Or, equivalently,
\begin{equation}\label{e44ee}\mathcal{Z}_t=-\gamma_1\mathcal{Z}+\sum_{k=2}^N(\gamma_1-\gamma_k)B_kA\mathcal{Z},\ t>0;\ \mathcal{Z}(0)=\mathcal{Z}_o.\end{equation}
Recall that $B_j,\ j=1,...,N,$ are positive semidefinite symmetric matrices (by the definition of $B_j,\Lambda_{\gamma_j}$ and the fact that $\textbf{B}$ is a Gram matrix), therefore, $\left<B_jq,q\right>_N\geq0,\ \forall q\in\mathbb{R}^N,\ j=1,...,N.$ Consequently,   $A=\left(B_1+...+B_N\right)^{-1}$ is a positive definite symmetric matrix. Thus one can define another positive definite symmetric matrix, denoted by $A^\frac{1}{2}$, such that $A^\frac{1}{2}A^\frac{1}{2}=A$ (the square root of $A$; for details see \cite{bour}). Let us scalarly multiply equation (\ref{e44ee}) by $A\mathcal{Z}$, to get
\begin{equation}\label{e70}\begin{aligned}\frac{1}{2}\frac{d}{dt}\|A^\frac{1}{2}\mathcal{Z}(t)\|_N^2&=-\gamma_1\|A^\frac{1}{2}\mathcal{Z}(t)\|_N^2 +\sum_{k=2}^N(\gamma_1-\gamma_k)\left<B_kA\mathcal{Z}(t),A\mathcal{Z}(t)\right>_N,\end{aligned}\end{equation}
that leads to $$\frac{1}{2}\frac{d}{dt}\|A^\frac{1}{2}\mathcal{Z}(t)\|_N^2\leq-\gamma_1\|A^\frac{1}{2}\mathcal{Z}(t)\|_N^2,\ t\geq0,$$ since $\gamma_1-\gamma_k<0,\ k=2,...,N$. Here $\|\cdot\|_N$ stands for the euclidean norm in $\mathbb{R}^N$. The above relation implies the exponential decay of $\mathcal{Z}$ in the $\|A^\frac{1}{2}\cdot\|_N$-norm, i.e.,
$$\|A^\frac{1}{2}\mathcal{Z}(t)\|_N^2\leq e^{-2\gamma_1 t}\|A^\frac{1}{2}\mathcal{Z}_o\|_N^2,\ t\geq0,$$where using  the fact that $A^\frac{1}{2}$ is a positive definite symmetric matrix, we finally arrive to
\begin{equation}\label{e45}\|\mathcal{Z}(t)\|_N^2\leq Ce^{-2\gamma_1t}\|\mathcal{Z}_o\|_N^2,\ t\geq0,\end{equation}for some positive constant $C$.

The rest of the proof follows by identical arguments as in the proof of \cite[Theorem 4.1]{barbu1}, therefore it is omitted. However, we  notice that, as shown in \cite{barbu1}, the remaining infinitely many modes will be exponentially stabilized by the feedback $\textbf{v}$ with the gain $e^{-\rho t}$. Hence, by (\ref{e45}), since $\gamma_1$ and $\rho$ may be taken arbitrarily large, the decaying constant $\mu$ in (\ref{mu}) may be taken arbitrarily large as well.  $\square$

In the case of general eigenvalues, we have

\textbf{\textit{Proof of Theorem \ref{T1!}}} The proof is almost identical with that one of Theorem \ref{T1}, that is why we shall skip most of computational details. 

Recall the Dirichlet operators $D_{\gamma_i},\ i=1,...,N,$ introduced after relation (\ref{e4!}). Likewise in (\ref{e42})-(\ref{e6}) we get
\begin{equation}\label{e6!}\begin{array}{l}\left<D_{\gamma_j} \alpha,\phi_1\right>=-\frac{1}{\gamma_j-\delta-\lambda_1}\left<\alpha,\frac{\partial \phi_1}{\partial\nu}\right>_0 \text{ and }
\left<D_{\gamma_j} \alpha,\phi_i\right>=-\frac{1}{\gamma_j-\lambda_i}\left<\alpha,\frac{\partial \phi_i}{\partial\nu}\right>_0,\end{array}\end{equation}for all $ i=2,...,N,\ j=1,...,N.$ 

 Now, introduce the feedbacks
\begin{equation}v_k(t,x)=\left<A\left(\begin{array}{c}\left<y(t),\phi_1\right>\\
\left<y(t),\phi_2\right>\\...\\\left<y(t),\phi_N\right>\end{array}\right),\left(\begin{array}{c}\frac{1}{\gamma_k-\delta-\lambda_1}\frac{\partial}{\partial \nu}\phi_1(x)\\ \frac{1}{\gamma_k-\lambda_2}\frac{\partial}{\partial \nu}\phi_2(x)\\ ... \\\frac{1}{\gamma_k-\lambda_N}\frac{\partial}{\partial \nu}\phi_N(x)\end{array}\right)\right>_{N},\ t\geq0,\ x\in \Gamma_1,
\end{equation} for $k=1,...,N$; and $\textbf{v}=v_1+v_2+...+v_N$. It is clear that
$$ \textbf{v}=\left<T\ A\left(\begin{array}{c}\left<y(t),\phi_1\right>\\
\left<y(t),\phi_2\right>\\...\\\left<y(t),\phi_N\right>\end{array}\right),\left(\begin{array}{c}1\\ 1\\ ... \\1\end{array}\right)\right>_{N},$$ which is the boundary feedback plugged in (\ref{e501!}).

Set $z(t,x):=y(t,x)-D_{\gamma_1} v_1(t,x)-D_{\gamma_2}v_2(t,x)-...-D_{\gamma_N}v_N(t,x)$.   Similarly as in (\ref{e13}) we have 
 \begin{equation}\label{e13!!}v_k(t,x)=\frac{1}{2}\left<A\left(\begin{array}{c}\left<z(t),\phi_1\right>\\
\left<z(t),\phi_2\right>\\...\\\left<z(t),\phi_N\right>\end{array}\right),\left(\begin{array}{c}\frac{1}{\gamma_k-\delta-\lambda_1}\frac{\partial}{\partial \nu}\phi_1\\ \frac{1}{\gamma_k-\lambda_2}\frac{\partial}{\partial \nu}\phi_2\\ ... \\\frac{1}{\gamma_k-\lambda_N}\frac{\partial}{\partial \nu}\phi_N\end{array}\right)\right>_{N},\ k=1,...,N.
\end{equation}Besides this, as in (\ref{e17}), we also have that
\begin{equation}\label{e17!!}\left(\begin{array}{c}\left<D_{\gamma_k}v_k,\phi_1\right>\\ \left<D_{\gamma_k}v_k,\phi_2\right>\\...\\\left<D_{\gamma_k}v_k,\phi_N\right>\end{array}\right)=-\frac{1}{2}B_kA\left(\begin{array}{c}\left<z(t),\phi_1\right>\\
\left<z(t),\phi_2\right>\\...\\\left<z(t),\phi_N\right>\end{array}\right),\ k=1,...,N.\end{equation}

In terms of the new variable $z$, the linearization (\ref{e501!})   may be rewritten as
\begin{equation}\label{e15!}\left\{\begin{array}{l}\begin{aligned}&z_t(t,x)+\mathcal{A}z(t,x)=R(\left<z,\phi\right>,\left<z,\phi_3\right>,...,\left<z,\phi_N\right>),\ t>0,\ x\in\Omega,\end{aligned}\\
z(0,x)=z_o(x):=y_o-D_{\gamma_1} v_1(0,x)-D_{\gamma_2}v_2(0,x)-...-D_{\gamma_N}v_N(0,x),\ x\in\Omega,\end{array} \right.\ \end{equation}
where
\begin{equation}\label{r!}\begin{aligned}\displaystyle  R(\left<z,\phi_1\right>,\left<z,\phi_2\right>,...,\left<z,\phi_N\right>)&:=-\left(\sum_{i=1}^ND_{\gamma_i}v_i\right)_t
-2\sum_{i,j=1}^N\lambda_j\left<D_{\gamma_i}v_i,\phi_j\right>\phi_j\\&
-\delta\sum_{i=1}^N \left<D_{\gamma_i}v_i,\phi_1\right>\phi_1+\sum_{i=1}^N\gamma_iD_{\gamma_i}v_i.\end{aligned}\end{equation}Then, simple computations as in (\ref{rr}) give
\begin{equation}\label{rr!!}\begin{aligned}\left(\begin{array}{c}\left<R,\phi_1\right>\\ \left<R,\phi_2\right>\\ ... \\ \left<R,\phi_N\right>\end{array}\right)&=\frac{1}{2}\mathcal{Z}_t+\Lambda\mathcal{Z}-\frac{1}{2}\gamma_1\mathcal{Z}+\frac{1}{2}\sum_{k=2}^N(\gamma_1-\gamma_k)B_kA\mathcal{Z}-\frac{\delta}{2}\mathcal{O},\end{aligned}
\end{equation}
where we have denoted by $\mathcal{Z}(t):=\left(\begin{array}{c}\left<z(t),\phi_1\right>\\
\left<z(t),\phi_2\right>\\...\\\left<z(t),\phi_N\right>\end{array}\right),\ t\geq0;$  by $\Lambda:=\left(\begin{array}{cccc}\lambda_1&0&...&0\\
0&\lambda_2&...&0\\
...&...&...&...\\
0&0&...&\lambda_N\end{array}\right);$and by $ \mathcal{O}:=\left(\begin{array}{c}\left<z(t),\phi_1\right>\\ 0\\...\\0\end{array}\right).$ Then, likewise in (\ref{e70}), we get

\begin{equation}\label{mun1}\frac{d}{dt}\|A^\frac{1}{2}\mathcal{Z}(t)\|_N^2\leq -2\gamma_1\|A^\frac{1}{2}\mathcal{Z}(t)\|_N^2+\delta \|A\|\|\mathcal{Z}(t)\|_N^2,\ t\geq0,\end{equation}where $\|A\|$ stands for the classical euclidean norm of the matrix $A$. Denote by $\lambda_1(A)>0$ the first eigenvalue of $A$, then by integration with respect to time in (\ref{mun1}), it yields
\begin{equation}\label{mun2}\lambda_1(A)\|\mathcal{Z}(t)\|_N^2\leq e^{-2\gamma_1t}\|A^\frac{1}{2}\mathcal{Z}_0\|_N^2+\int_0^te^{-2\gamma_1(t-s)}\delta \|A\|\|\mathcal{Z}(s)\|_N^2ds,\end{equation}where making use of the Gronwall's lemma
\begin{equation}\label{mun3}\|\mathcal{Z}(t)\|_N^2\leq \frac{1}{\lambda_1(A)}\|A^\frac{1}{2}\mathcal{Z}_0\|_N^2\exp\left[\left(\frac{\delta\|A\|}{\lambda_1(A)}-2\gamma_1\right)t\right],\ t\geq0.\end{equation}

Let us denote by $b_{ij},\ i,j=1,...,N,$ the entries of the matrix $B_1+B_2+...+B_N$. By the definition of $B_i$ (see (\ref{bo!})) and of the constants $\gamma_i,\ i=1,...,N$ and $\delta$ (see after (\ref{e4!})), we have $\lim_{\gamma_1\rightarrow\infty}\gamma_1^2|b_{ij}|\in \mathbb{R}_+,\forall i,j=1,...,N.$ Set $b_{ij}^*$ the entries 
of the adjoint of the matrix $B_1+...+B_N$. By definition of the adjoint and the above observation we deduce that $\lim_{\gamma_1\rightarrow \infty}\gamma_1^{2(N-1)}|b_{ij}^*|\in\mathbb{R}_+,\ \forall i,j=1,...,N$. Besides this, the above observation  also implies that $\lim_{\gamma_1\rightarrow\infty}\gamma_1^{2(N+1)}|det(B_1+...+B_N)|=+\infty.$ In other words, we have
$$|b_{ij}^*|\leq c_{ij}\frac{1}{\gamma_1^{2(N-1)}},\ i,j=1,...,N, \text{ and }\left|\frac{1}{det(B_1+...+B_N)}\right|\leq c \gamma_1^{2(N+1)},$$for some positive constants $c_{ij}, c,\ i,j=1,...,N$, independent of $\gamma_1$, for $\gamma_1$ large enough. This yields that, denoting by $a_{ij},\ i,j=1,...,N,$ the entries of the matrix $A=(B_1+...+B_N)^{-1}$, there exists some constant $C>0$, independent of $\gamma_1$, such that $|a_{ij}|\leq C\gamma_1^4,\ i,j=1,...,N.$ Consequently
$$\|A\|\leq CN^2\gamma_1^4,$$for $\gamma_1$ large enough. In conclusion, for $\gamma_1$ large enough, there exists some $\mu>0$, such that
$$\frac{\delta \|A\|}{\lambda_1(A)}-2\gamma_1=\frac{ \|A\|}{\gamma_1^{4}\lambda_1(A)}-2\gamma_1\leq CN^2\frac{1}{\lambda_1(A)}-2\gamma_1\leq -\mu,$$ since $\frac{1}{\lambda_1(A)}\rightarrow 0$ for $\gamma_1\rightarrow \infty$. This, together with (\ref{mun3}), yields
$$\|\mathcal{Z}(t)\|_N^2\leq \frac{1}{\lambda_1(A)}e^{-\mu t}\|A^\frac{1}{2}\mathcal{Z}_0\|_N^2,\ t\geq0,$$that is the exponential decay of the first $N$ modes of $z$. The rest of the proof follows immediately as the proof of Theorem \ref{T1}, and it is omitted. $\hfill \Box$

Finally,  for the proof of Theorem \ref{T2}, similarly as for the one of Theorem \ref{T2!}, one may proceed as in \cite[Theorem 4.2]{barbu1}, or as in \cite[Theorem 1.1]{lasi}. Therefore, they are  omitted.

\section{Appendix}

\begin{lemma}\label{l2}Under assumption $(H)$, for any $\rho<\gamma_1<\gamma_2<...<\gamma_N$, we have
\begin{equation}\label{e30}\left|\begin{array}{cccc}\frac{1}{\gamma_1-\lambda_1} & \frac{1}{\gamma_1-\lambda_2} &...& \frac{1}{\gamma_1-\lambda_N}\\
\frac{1}{\gamma_2-\lambda_1} & \frac{1}{\gamma_2-\lambda_2} &...& \frac{1}{\gamma_2-\lambda_N}\\
...&...&...&...\\
\frac{1}{\gamma_N-\lambda_1} & \frac{1}{\gamma_N-\lambda_2} &...& \frac{1}{\gamma_N-\lambda_N} \end{array}\right|\neq0. \end{equation}
\end{lemma}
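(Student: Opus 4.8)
The plan is to recognize the matrix in (\ref{e30}) as a \emph{Cauchy matrix}, namely the $N\times N$ matrix $C=(C_{ij})$ with $C_{ij}=\frac{1}{\gamma_i-\lambda_j}$, and to evaluate its determinant via the classical Cauchy determinant identity
\[
\det C=\frac{\prod_{1\le i<j\le N}(\gamma_j-\gamma_i)(\lambda_i-\lambda_j)}{\prod_{i,j=1}^{N}(\gamma_i-\lambda_j)}.
\]
Once this identity is in hand, the conclusion is immediate. By hypothesis $(H)$ the $\lambda_j$ are mutually distinct, so every factor $\lambda_i-\lambda_j$ with $i<j$ in the numerator is nonzero; the constants are chosen so that $\gamma_1<\gamma_2<\dots<\gamma_N$, so every factor $\gamma_j-\gamma_i$ with $i<j$ is nonzero; and since $\gamma_i>\rho>\lambda_j$ for all $i,j$ (the unstable eigenvalues satisfy $\lambda_j<\rho$ while $\gamma_i>\rho$), each denominator factor $\gamma_i-\lambda_j$ is nonzero and the right-hand side is a well-defined nonzero number. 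Hence $\det C\neq0$, which is exactly (\ref{e30}).

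To establish the Cauchy identity itself I would argue by induction on $N$ using elementary row and column operations. Subtracting the last row from each row $i<N$ replaces the $(i,j)$ entry by
\[
\frac{1}{\gamma_i-\lambda_j}-\frac{1}{\gamma_N-\lambda_j}=\frac{\gamma_N-\gamma_i}{(\gamma_i-\lambda_j)(\gamma_N-\lambda_j)},
\]
after which one factors $\gamma_N-\gamma_i$ out of each such row and $\frac{1}{\gamma_N-\lambda_j}$ out of each column, leaving the last row identically equal to one. A symmetric column operation (subtracting the last column from the others) then clears that row and, upon expanding, reduces the determinant to a Cauchy determinant of size $N-1$ in the variables $\gamma_1,\dots,\gamma_{N-1}$ and $\lambda_1,\dots,\lambda_{N-1}$; collecting the extracted factors reproduces the claimed product. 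The base case $N=1$ is trivial.

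Alternatively, and perhaps more conceptually, one may bypass the explicit formula by observing that the rational functions $\gamma\mapsto\frac{1}{\gamma-\lambda_j}$, $j=1,\dots,N$, form a Chebyshev system on the interval $(\rho,\infty)$, which contains none of the poles $\lambda_j<\rho$. Indeed, any nontrivial combination $\sum_j c_j\frac{1}{\gamma-\lambda_j}$, after clearing the common denominator $\prod_k(\gamma-\lambda_k)$, equals $\frac{P(\gamma)}{\prod_k(\gamma-\lambda_k)}$ with $\deg P\le N-1$, so it admits at most $N-1$ zeros on $(\rho,\infty)$; were (\ref{e30}) to vanish, some nontrivial combination would vanish at the $N$ distinct points $\gamma_1,\dots,\gamma_N$, a contradiction. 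I expect the only genuine work to be the determinant identity (or, equivalently, the Chebyshev-system verification); the nonvanishing of the individual factors is routine and follows directly from $(H)$ together with the separation $\gamma_i>\rho>\lambda_j$.
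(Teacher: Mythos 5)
Your proposal is correct, and your primary route is, in substance, the same argument as the paper's: the paper proves the lemma by induction on $N$, subtracting the $N$th column from the first $N-1$ columns, extracting the factors $\lambda_N-\lambda_k$ and $(\gamma_k-\lambda_N)^{-1}$, then subtracting the $N$th row from the others and reducing to the $(N-1)\times(N-1)$ determinant of the same shape. That is exactly the inductive proof of the Cauchy identity you sketch, except that the paper never names the Cauchy matrix and never records the closed-form value
\[
\det\Bigl(\tfrac{1}{\gamma_i-\lambda_j}\Bigr)_{i,j=1}^{N}
=\frac{\prod_{1\le i<j\le N}(\gamma_j-\gamma_i)(\lambda_i-\lambda_j)}{\prod_{i,j=1}^{N}(\gamma_i-\lambda_j)},
\]
since only nonvanishing is needed; making the identity explicit costs nothing and buys slightly more (an exact value, hence also a quantitative lower bound). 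Your second, zero-counting argument is genuinely different from the paper: a vanishing determinant would give a nontrivial vector $(c_1,\dots,c_N)$ with $\sum_j c_j(\gamma_i-\lambda_j)^{-1}=0$ for $i=1,\dots,N$, whereas any nontrivial combination $\sum_j c_j(\gamma-\lambda_j)^{-1}=P(\gamma)/\prod_k(\gamma-\lambda_k)$, $\deg P\le N-1$, has at most $N-1$ zeros away from the poles. This avoids determinant manipulations altogether and is closer in spirit to the unique-continuation argument the paper uses in its subsequent lemma (the invertibility of $B_1+\cdots+B_N$). One small repair there: the bound ``at most $N-1$ zeros'' presupposes $P\not\equiv 0$, so you should note that the functions $\gamma\mapsto(\gamma-\lambda_j)^{-1}$ with distinct poles are linearly independent (multiply by $\gamma-\lambda_k$ and let $\gamma\to\lambda_k$ to get $c_k=0$); with that one line the alternative proof is also complete.
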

\begin{proof}
 Let us prove this by mathematical induction over $N$.  Step 1, for $N=2$, we have
$$\left|\begin{array}{cc}\frac{1}{\gamma_1-\lambda_1}&\frac{1}{\gamma_1-\lambda_2}\\
\frac{1}{\gamma_2-\lambda_1}&\frac{1}{\gamma_2-\lambda_2}\end{array}\right|=-\frac{(\lambda_1-\lambda_2)(\gamma_1-\gamma_2)}{(\gamma_1-\lambda_1)(\gamma_2-\lambda_2)(\gamma_2-\lambda_1)(\gamma_1-\lambda_2)}\neq0,$$since $\lambda_1<\lambda_2$ and $\gamma_1<\gamma_2$.

 Step 2, we assume that for $N-1$ the claim is true and prove it for $N$. To this end we have
$$\begin{aligned}&\left|\begin{array}{cccc}\frac{1}{\gamma_1-\lambda_1} & \frac{1}{\gamma_1-\lambda_2} &...& \frac{1}{\gamma_1-\lambda_N}\\
\frac{1}{\gamma_2-\lambda_1} & \frac{1}{\gamma_2-\lambda_2} &...& \frac{1}{\gamma_2-\lambda_N}\\
...&...&...&...\\
\frac{1}{\gamma_{N}-\lambda_1} & \frac{1}{\gamma_{N}-\lambda_2} &...& \frac{1}{\gamma_{N}-\lambda_N}\\ \end{array}\right|=\\&
\text{(Subtracting from the first column the $N^{th}$ one, $...$, from the $(N-1)^{th}$ column the $N^{th}$ one )}\\&
=(-1)^{N-1}\left|\begin{array}{ccccc}\frac{\lambda_{N}-\lambda_1}{(\gamma_1-\lambda_1)(\gamma_1-\lambda_N)}& \frac{\lambda_{N}-\lambda_2}{(\gamma_1-\lambda_2)(\gamma_1-\lambda_N)}&...&\frac{\lambda_{N}-\lambda_{N-1}}{(\gamma_1-\lambda_{N-1})(\gamma_1-\lambda_N)}&\frac{1}{\gamma_1-\lambda_N}\\
\frac{\lambda_{N}-\lambda_1}{(\gamma_2-\lambda_1)(\gamma_2-\lambda_N)}& \frac{\lambda_{N}-\lambda_2}{(\gamma_2-\lambda_2)(\gamma_2-\lambda_N)}&...&\frac{\lambda_{N}-\lambda_{N-1}}{(\gamma_2-\lambda_{N-1})(\gamma_2-\lambda_N)}&\frac{1}{\gamma_2-\lambda_N}\\
...&...&...&...\\
\frac{\lambda_{N}-\lambda_1}{(\gamma_N-\lambda_1)(\gamma_N-\lambda_N)}& \frac{\lambda_{N}-\lambda_2}{(\gamma_N-\lambda_2)(\gamma_N-\lambda_N)}&...&\frac{\lambda_{N}-\lambda_{N-1}}{(\gamma_N-\lambda_{N-1})(\gamma_N-\lambda_N)}&\frac{1}{\gamma_N-\lambda_N}\end{array}\right|\\&
=\frac{(-1)^{N-1}}{\gamma_{N}-\lambda_N}\prod_{k=1}^{N-1}\frac{ \lambda_N-\lambda_k}{\gamma_k-\lambda_N}\left|\begin{array}{ccccc}\frac{1}{\gamma_1-\lambda_1} & \frac{1}{\gamma_1-\lambda_2} &...& \frac{1}{\gamma_1-\lambda_{N-1}}&1\\
\frac{1}{\gamma_2-\lambda_1} & \frac{1}{\gamma_2-\lambda_2} &...& \frac{1}{\gamma_2-\lambda_{N-1}}&1\\
...&...&...&...\\
\frac{1}{\gamma_{N}-\lambda_1} & \frac{1}{\gamma_{N}-\lambda_2} &...& \frac{1}{\gamma_{N}-\lambda_{N-1}}&1\\ \end{array}\right|\\&
\text{(Subtracting the $N^{th}$  line from the first one, ..., the $N^{th}$ line from the $(N-1)^{th}$ one)}\\&
=\frac{(-1)^{N-1}}{\gamma_{N}-\lambda_N}\prod_{k=1}^{N-1}\frac{ (\lambda_N-\lambda_k)(\gamma_N-\gamma_k)}{(\gamma_k-\lambda_N)(\gamma_N-\lambda_k)}\left|\begin{array}{cccc}\frac{1}{\gamma_1-\lambda_1} & \frac{1}{\gamma_1-\lambda_2} &...& \frac{1}{\gamma_1-\lambda_{N-1}}\\
\frac{1}{\gamma_2-\lambda_1} & \frac{1}{\gamma_2-\lambda_2} &...& \frac{1}{\gamma_2-\lambda_{N-1}}\\
...&...&...&...\\
\frac{1}{\gamma_{N-1}-\lambda_1} & \frac{1}{\gamma_{N-1}-\lambda_2} &...& \frac{1}{\gamma_{N-1}-\lambda_{N-1}} \end{array}\right|\neq0
\end{aligned}$$by the inductive hypothesis and the fact that $\left\{\lambda_1\right\}<\lambda_2<...<\lambda_{N}$ and $\gamma_1<\gamma_2<...<\gamma_{N}$.
\end{proof}

\begin{lemma}\label{l1} The sum $B_1+B_2+...+B_N$ is an invertible matrix, where $B_k,\ k=1,...,N$ are introduced in relation (\ref{bo}).
\end{lemma}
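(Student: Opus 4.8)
The plan is to upgrade the evident positive semidefiniteness of $B_1+\dots+B_N$ to strict positive definiteness. First I would record that each $B_k=\Lambda_{\gamma_k}\textbf{B}\Lambda_{\gamma_k}$ is symmetric and positive semidefinite: $\Lambda_{\gamma_k}$ is a real diagonal matrix (well defined, since $\gamma_k>\rho>\lambda_i$ forces $\gamma_k-\lambda_i>0$), and $\textbf{B}$ is a Gram matrix, whence for every $q\in\mathbb{R}^N$,
\[
\langle B_kq,q\rangle_N=\langle\textbf{B}\Lambda_{\gamma_k}q,\Lambda_{\gamma_k}q\rangle_N=\Big\|\sum_{i=1}^N\frac{q_i}{\gamma_k-\lambda_i}\frac{\partial}{\partial\nu}\phi_i\Big\|_0^2\ge0.
\]
Summing over $k$ shows that $B_1+\dots+B_N$ is symmetric and positive semidefinite, so it is invertible if and only if it is positive definite, i.e. if and only if $\langle(B_1+\dots+B_N)q,q\rangle_N=0$ forces $q=0$.

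Next I would analyse this null case. If $\langle(B_1+\dots+B_N)q,q\rangle_N=0$, then since every summand is nonnegative each one must vanish, giving
\[
\sum_{i=1}^N\frac{q_i}{\gamma_k-\lambda_i}\frac{\partial}{\partial\nu}\phi_i=0\ \text{ in }L^2(\Gamma_1),\qquad k=1,\dots,N.
\]
Putting $d_i:=q_i\frac{\partial}{\partial\nu}\phi_i\in L^2(\Gamma_1)$, these are exactly the relations $\sum_{i=1}^N M_{ki}d_i=0$ with $M_{ki}:=\frac{1}{\gamma_k-\lambda_i}$, that is $M(d_1,\dots,d_N)^\top=0$ read as a vector-valued system. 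By Lemma \ref{l2} the scalar matrix $M$ is invertible — this is precisely where hypothesis $(H)$ together with $\gamma_1<\dots<\gamma_N$ enters — so applying $M^{-1}$ entrywise yields $d_i=0$, i.e. $q_i\frac{\partial}{\partial\nu}\phi_i=0$ in $L^2(\Gamma_1)$ for every $i$.

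Finally I would rule out the degenerate possibility that some factor $\frac{\partial}{\partial\nu}\phi_i$ vanishes identically. The eigenfunction $\phi_i$ solves $-\Delta\phi_i+a\phi_i=\lambda_i\phi_i$ in $\Omega$ and already satisfies $\phi_i=0$ on $\Gamma_1$; were $\frac{\partial}{\partial\nu}\phi_i$ also to vanish on the positive-measure set $\Gamma_1$, unique continuation would force $\phi_i\equiv0$, contradicting $\|\phi_i\|=1$. Hence $\frac{\partial}{\partial\nu}\phi_i\ne0$ in $L^2(\Gamma_1)$, so $q_i\frac{\partial}{\partial\nu}\phi_i=0$ gives $q_i=0$ for all $i$; thus $q=0$ and the sum is positive definite, therefore invertible. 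The step I expect to be most delicate is the decoupling in the middle paragraph: although the family $\{\frac{\partial}{\partial\nu}\phi_i\}$ may be linearly dependent — so that no individual $B_k$ need be invertible — the invertibility of the Cauchy-type matrix $M$ separates the $N$ identities and annihilates each $d_i$ on its own, and it is exactly this mechanism, rather than the linear independence assumed in \cite{barbu1}, that drives the result.
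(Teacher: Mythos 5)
Your proof is correct, and it follows the same overall strategy as the paper's own proof: exploit the Gram structure to reduce invertibility of $B_1+\dots+B_N$ to showing that the vanishing of all the quadratic forms $\langle B_kq,q\rangle_N$ forces $q=0$, then invoke Lemma \ref{l2} together with unique continuation. The one place you genuinely diverge is the decoupling step, and your version is the cleaner one: you read the $N$ identities $\sum_{i}\frac{q_i}{\gamma_k-\lambda_i}\frac{\partial}{\partial\nu}\phi_i=0$ as a scalar-matrix system $M(d_1,\dots,d_N)^\top=0$ acting on the vector of functions $d_i=q_i\frac{\partial}{\partial\nu}\phi_i\in L^2(\Gamma_1)$, and invert the Cauchy matrix $M$ once, at the level of $L^2(\Gamma_1)$; this needs only that each $\frac{\partial}{\partial\nu}\phi_i$ is not the zero element of $L^2(\Gamma_1)$. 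The paper instead decouples pointwise: for a.e.\ fixed $x\in\Gamma_1$ it views the identities as an $N\times N$ homogeneous system in the unknowns $z_i$, whose determinant factors as $\prod_i\frac{\partial}{\partial\nu}\phi_i(x)$ times the Cauchy determinant of Lemma \ref{l2}, and concludes $z=0$ from its nonvanishing. That pointwise argument implicitly requires $\frac{\partial}{\partial\nu}\phi_i(x)\neq0$ for almost every $x\in\Gamma_1$ --- a strictly stronger consequence of unique continuation than the mere nontriviality you use --- so your arrangement of the same ingredients is, if anything, the more economical and airtight of the two.
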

\begin{proof}
Arguing by contradiction, let us assume that there is  $z=\left(\begin{array}{c}z_1\\ z_2\\...\\z_N\end{array}\right)\in\mathbb{R}^N$, nonzero, such that $(B_1+...+B_N)z=0.$ It follows that
$$\sum_{k=1}^N\left<B_kz,z\right>_N=0,$$or, equivalently,
$$\sum_{k=1}^N\int_{\Gamma_1}\left(\sum_{i=1}^Nz_i\frac{1}{\gamma_k-\lambda_i}\frac{\partial}{\partial\nu}\phi_i(x)\right)^2dx=0.$$ Because the Lebesgue measure of $\Gamma_1$ is nonzero, we deduce from the above that
$$\sum_{i=1}^Nz_i\frac{1}{\gamma_k-\lambda_i}\frac{\partial}{\partial\nu}\phi_i(x)=0,\  \text{ a.e. on } \Gamma_1,$$ for all $ k=1,...,N.$ This gives  $N\times N$ linear homogeneous systems, with the unknowns $z_i,\ i=1,...,N$, for almost all $x\in \Gamma_1$. By the unique continuation property of the eigenfunctions $\left\{\phi_i\right\}_{i=1}^N$, we know that, for all $i=1,2,...,N$, $\frac{\partial}{\partial\nu}\phi_i\neq 0$ on $\Gamma_1$. The determinant of the matrix of the corresponding system is
$$\begin{aligned}&\left|\begin{array}{cccc}\frac{1}{\gamma_1-\lambda_1}\frac{\partial}{\partial\nu}\phi_1(x) & \frac{1}{\gamma_1-\lambda_2}\frac{\partial}{\partial\nu}\phi_2(x) &...& \frac{1}{\gamma_1-\lambda_N}\frac{\partial}{\partial\nu}\phi_N(x)\\
\frac{1}{\gamma_2-\lambda_1}\frac{\partial}{\partial\nu}\phi_1(x) & \frac{1}{\gamma_2-\lambda_2}\frac{\partial}{\partial\nu}\phi_2(x) &...& \frac{1}{\gamma_2-\lambda_N}\frac{\partial}{\partial\nu}\phi_N(x)\\
...&...&...&...\\
\frac{1}{\gamma_N-\lambda_1}\frac{\partial}{\partial\nu}\phi_1(x) & \frac{1}{\gamma_N-\lambda_2}\frac{\partial}{\partial\nu}\phi_2(x) &...& \frac{1}{\gamma_N-\lambda_N}\frac{\partial}{\partial\nu}\phi_N(x)\end{array}\right|\\&=
\prod_{i=1}^N\frac{\partial}{\partial\nu}\phi_i(x)\left|\begin{array}{cccc}\frac{1}{\gamma_1-\lambda_1} & \frac{1}{\gamma_1-\lambda_2} &...& \frac{1}{\gamma_1-\lambda_N}\\
\frac{1}{\gamma_2-\lambda_1} & \frac{1}{\gamma_2-\lambda_2} &...& \frac{1}{\gamma_2-\lambda_N}\\
...&...&...&...\\
\frac{1}{\gamma_N-\lambda_1} & \frac{1}{\gamma_N-\lambda_2} &...& \frac{1}{\gamma_N-\lambda_N} \end{array}\right|\neq0, \text{ a.e. }x\in\Gamma_1,\end{aligned}$$ by Lemma \ref{l2}. Hence, necessarily $z=0$. This is in contradiction with our assumption. We conclude that the sum $B_1+...+B_N$ is indeed an invertible matrix.
\end{proof}

 \textbf{Acknowledgment} The author is indebted to the anonymous referees for their pertinent and helpful comments that lead to the improvement of the paper. This work was supported by a post-doctoral  fellowship of the Alexander von Humboldt Foundation.

\end{document}